\definecolor{r}{rgb}{0.9,0.3,0.1}
\definecolor{b}{rgb}{0.1,0.3,0.9}
\newtheorem{theo}{Theorem}[section]
\newtheorem{defin}[theo]{Definition}
\newtheorem{prop}[theo]{Proposition}
\newtheorem{coro}[theo]{Corollary}
\newtheorem{lemm}[theo]{Lemma}
\newtheorem{rem}[theo]{Remark}
\newcommand{\al}{\alpha}
\newcommand{\be}{\beta}
\newcommand{\Ga}{\Gamma}
\newcommand{\la}{\lambda}
\newcommand{\Om}{\Omega}
\newcommand{\ep}{\epsilon }
\newcommand{\te}{\theta}
\newcommand{\De}{\Delta}
\newcommand{\de}{\delta}
\newcommand{\pa}{\partial}
\newcommand{\R}{{\bf R}^n}
\newcommand{\ri}{\rightarrow}
\begin{document}
\baselineskip=18pt

\title[Parabolic function spaces]
{Properties of parabolic Sobolev and  parabolic Besov spaces}

\author{TongKeun Chang}
\address{Department of Mathematics, yonsei University, Seoul, South Korea}
\email{chang7357@yonsei.ac.kr}

\thanks{The author was supported by the
National Research Foundation of Korea NRF-2010-0016699 }

\begin{abstract}
In this paper, we  characterize   parabolic Besov and parabolic
Sobolev spaces in ${\bf R}^{n+1}$ and ${\bf R}^{n+1}_T, \,\, T
> 0$. We also, study the relation between parabolic Besov spaces in ${\bf R}^{n}_T, \,\, T > 0$ and standard
Besov space in $\R$.
\end{abstract}

\maketitle

\noindent {\it AMS 2000 subject classifications:} Primary 46B70;
  Secondary 35K15.

\section{Introduction}
\setcounter{equation}{0}

In this paper, we study the properties of  the parabolic Besov
spaces ${\mathcal B}^{\al, \frac12 \al}_p ({\bf R}^{n+1})$ and the
parabolic Sobolev spaces ${\mathcal L}^p_\al({\bf R}^{n+1})$ for $1
\leq p \leq \infty, \,\, \al \in {\bf R}$. We also, study the
relation between the parabolic Besov spaces in ${\bf R}^{n}_T =
\{(X,t) \,| \, X \in {\bf R}^n, \,\, 0 < t < T \}, \,\, 0 < T \leq
\infty $ and the standard Besov space in $\R$.

The parabolic Sobolev spaces were studied in \cite{FT} and \cite{R}.
The authors in \cite{FT} proved the trace theorem of parabolic
functions which is similar to the usual Sobolev spaces (see
\cite{BL}). In fact, the parabolic Besov and the parabolic Sobolev
spaces are particular case of the anisotropic Sobolev spaces and
anisotropic Besov spaces, respectively, with dilation matrix
$\de_\ep =(\ep^2, \ep, \cdots, \ep)$ (see \cite{DT1}, \cite{DT2},
\cite{L} and \cite{N}).

For the properties about the usual Besov and Sobolev spaces, we
refer \cite{BL}, \cite{ N}, \cite{P}, \cite{S}, \cite{ Tr} and the
references therein.

The Besov and Sobolev spaces have being used in boundary value
problems of several elliptic type partial differential equations in
bounded domain in $\R$. When boundary data is given with the
function in some Besov or  Sobolev spaces, one can find the solutions
of the boundary value problems of elliptic type partial differential
equations which contained in the corresponding spaces with boundary
data (see \cite{BS}, \cite{CC}, \cite{ FMM}, \cite{JK}).

Like the  Besov and Sobolev spaces,  functions in parabolic Besov
and Sobolev spaces can be used with  boundary data and solutions of
initial-boundary value problems of parabolic type partial
differential equations in bounded cylinder (see \cite{JM} for the
case heat equation).

In section \ref{sec1},
 we introduce a parabolic Sobolev space
${\mathcal L}^p_\al ({\bf R}^{n+1})$ and parabolic Besov space
${\mathcal B}_p^{\al, \frac12 \al} ({\bf R}^{n+1})$. The properties to
 the parabolic Sobolev and parabolic Besov spaces are also stated.

In section \ref{sec1-2}, we show that
 $f \in {\mathcal L}^p_\al ({\bf R}^{n+1}), \,\, 1< p<\infty, \,\, \al \in {\bf R}$ is equivalent to  $f, D_{X_k} f, D_t^{\frac12} f \in
{\mathcal L}^p_{\al -1} ({\bf R}^{n+1})$ for all $1 \leq k \leq n$ (see Theorem \ref{iterates}), and that $f \in {\mathcal B}^{\al, \frac12 \al}_p
({\bf R}^{n+1})$,   $1 \leq p \leq \infty, \,\, \al \in {\bf R}$ is equivalent to
 $f,
D_{X_k} f, D_t^{\frac12} f \in {\mathcal B}_p^{\al -1, \frac12 \al -\frac12} ({\bf
R}^{n+1})$ for all $1 \leq i \leq n$ (see Theorem \ref{iterateb}).
Here $D^\frac12_t $ is a fractional differential operator whose
Fourier transform in term of $t$ variable  is defined by
$\widehat{D^\frac12_t} f(X,\tau) = |\tau|^\frac12
\widehat{f}(X,\tau)$.

Our result in section \ref{sec1-2} can be compared with  the
results of V. Gopala Rao and B. Frank Jones. In \cite{R},  V.
Gopala Rao showed that $f \in {\mathcal L}_\al^p ({\bf R}^{n+1})$
is equivalent to $ f , \, \, D_t(f* h_1) \in {\mathcal
L}_{\al-1}^p ({\bf R}^{n+1})$, where $*$ is a convolution in ${\bf
R}^{n+1}$ and $h_1(X,t) = c_1 t^{\frac{-n-1}{2}}
e^{-\frac{|X|^2}{4t}}$ if $t > 0$ and $h_1(X,t) =0$ if $t < 0$. In
\cite{J}, B. Frank Jones induced several equivalent norms of
parabolic Besov spaces. He also showed that
$f\in {\mathcal B}^{\al, \frac12 \al}_p ({\bf R}^{n+1}), \,\, 1 \leq p \leq \infty, \,\, \al \in {\bf R}$ is equivalent to $f, \,\, D_{X_k}f \in
{\mathcal B}^{\al-1, \frac12 \al -\frac12}_p ({\bf R}^{n+1})$ and $D_t f \in {\mathcal
B}^{\al -2,\frac12 \al -1}_p ({\bf R}^{n+1})$.

In  section \ref{sec4}, we characterize the parabolic Besov spaces
in ${\bf R}^{n}_T = \{ (X,t) \in {\bf R}^{n+1} \, | \, 0 < t < T \}, \,\, 0 < T \leq \infty$.
We show that the parabolic Besov spaces in
${\bf R}^{n}_T$ are also interpolation spaces and have the same
properties as the Theorem \ref{iterateb}.

In the section \ref{sec5},  we  study the properties of the solution
$u$ of the heat equation with initial data $f\in {\mathcal B}_p^{\al
-\frac2p} ({\bf R}^{n})$. We show that $u \in {\mathcal
B}_p^{\al,\frac12 \al} ({\bf R}^{n}_T)$, and we investigate an
equivalent  relation between the parabolic Besov norm $\|
u\|_{{\mathcal B}_p^{\al,\frac12 \al} ({\bf R}^{n}_T)}$  and the
usual Besov norm $ \| f\|_{{\mathcal B}_p^{\al -\frac2p} ({\bf
R}^{n})}$.
 For $ 0 \leq  \al, \,\, 1 \leq p \leq \infty$ and $ f \in
{\mathcal B}^{\al -\frac2p}_p (\R) $, we define a function by
\begin{align}\label{main4}
 u (X,t) = <f, \Ga(X- \cdot, t)>: =  \left\{\begin{array}{l}  <f, \Ga(X- \cdot, t)>_\al, \quad
  0 \leq \al < \frac2p, \\
\int_{\R} \Ga(X-Y,t) f(Y) dY, \quad \frac2p \leq \al,
\end{array}
\right.
\end{align}
where $\Ga (X,t) = c_n t^{-\frac{n}2} e^{-\frac{|X|^2}{4t}}$ if $t
> 0$ and $\Ga(X,t) =0$ if $t < 0$,  and $<\cdot, \cdot>_\al$ is duality pairing between ${\mathcal B}^{\al -\frac2p}_p (\R)  $
and ${\mathcal B}^{-\al +\frac2p}_q (\R), \,\, \frac1p + \frac1q =1$.  It is  easy to see that
$u$ is a solution to the  heat equation in ${\bf R}^{n}_\infty$ with
the initial value $f $. Our main result in section \ref{sec5} are
stated as follows.
\begin{theo}\label{mainresult}
 Let $f \in {\mathcal
B}^{\al - \frac2p}_p (\R)$ and $u$ be defined by (\ref{main4}).
Let $1 \leq p \leq \infty$,  $\al>0$ and $T < \infty$.
 Then $u\in {\mathcal B}^{\al, \frac12 \al}_p ({\bf R}^{n}_T) $ with
\begin{align*}
\| u \|_{{\mathcal B}^{\al,\frac12 \al}_p ({\bf R}^{n}_T)}  \approx  \| f\|_{
{\mathcal B}^{\al - \frac2p}_p (\R)}.
\end{align*}
\end{theo}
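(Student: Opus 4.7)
The plan is to prove the equivalence $\|u\|_{\mathcal{B}^{\alpha,\frac12\alpha}_p({\bf R}^{n}_T)} \approx \|f\|_{\mathcal{B}^{\alpha-2/p}_p({\bf R}^n)}$ in both directions, reducing the smoothness index $\alpha$ to a small base range via the iteration formula of Theorem \ref{iterateb} (in its ${\bf R}^n_T$-version from Section \ref{sec4}), and then settling the base range by a Littlewood--Paley decomposition of $f$ combined with direct heat-semigroup estimates.

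For the reduction step, I would use that $D_{X_k}$ commutes with convolution against $\Gamma(\cdot,t)$, so that $D_{X_k}u$ is the heat extension of $D_{X_k}f$; moreover, since $u$ satisfies the heat equation, at the level of frequencies the half-time-derivative $D_t^{1/2}$ corresponds to the spatial operator $(-\Delta)^{1/2}$, so $D_t^{1/2}u$ is (morally) the heat extension of $(-\Delta)^{1/2}f$. Coupling Theorem \ref{iterateb} on the parabolic side with the analogous standard decomposition $\|g\|_{\mathcal{B}^{s}_p} \approx \|g\|_{\mathcal{B}^{s-1}_p} + \sum_k \|D_{X_k}g\|_{\mathcal{B}^{s-1}_p}$ on the ${\bf R}^n$-side, an induction on $\lfloor\alpha\rfloor$ reduces the theorem to a fixed base strip $0 < \alpha \leq 1$.

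For the upper bound in the base range I would decompose $f = \sum_{j\geq 0} f_j$ with $\widehat{f}_j$ supported in $|\xi|\sim 2^j$. Since $\widehat{u}_j(\xi,t) = e^{-t|\xi|^2}\widehat{f}_j(\xi)$, one has $\|u_j(\cdot,t)\|_p \lesssim e^{-ct 4^j}\|f_j\|_p$ together with analogous bounds for a fixed number of spatial or half-time derivatives. Inserting these into a parabolic Littlewood--Paley characterization of $\mathcal{B}^{\alpha,\frac12\alpha}_p({\bf R}^n_T)$, the time integration
\[
\int_0^T 2^{j\alpha p} e^{-cpt 4^j}\,dt \approx 2^{j(\alpha - 2/p)p}
\]
produces exactly the $-2/p$ shift in the smoothness index, and summing over $j$ gives $\|u\|_{\mathcal{B}^{\alpha,\frac12\alpha}_p({\bf R}^n_T)} \lesssim \|f\|_{\mathcal{B}^{\alpha-2/p}_p}$. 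The reverse inequality is a trace-type statement: because $e^{t\Delta}$ is essentially invertible on each dyadic frequency band at time $t \sim 4^{-j}$, the same arithmetic can be read backwards to recover $\|f_j\|_p$ from an $L^p_t$-quantity controlled by $\|u\|_{\mathcal{B}^{\alpha,\frac12\alpha}_p({\bf R}^n_T)}$.

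The main obstacle I anticipate is the distributional range $0 < \alpha < 2/p$, in which $f$ has negative smoothness and $u$ is defined through the duality bracket $\langle f, \Gamma(X-\cdot, t)\rangle_\alpha$. Here I would first verify that for each fixed $(X,t)$ the function $Y \mapsto \Gamma(X-Y,t)$ lies in the predual $\mathcal{B}^{-\alpha+2/p}_q({\bf R}^n)$ with norm controlled by a suitable negative power of $t$, so that the pairing is well-defined, depends smoothly on $(X,t)$, and produces a classical solution of the heat equation with $f$ as distributional initial trace. Once this is done, the bracket reduces to ordinary convolution on each Littlewood--Paley block (on which $f_j$ is smooth), and the estimates above apply uniformly for all $\alpha > 0$.
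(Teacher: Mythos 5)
Your two endpoint computations are exactly the ones the paper runs: the upper bound at zero smoothness is Theorem \ref{frac3p} (Littlewood--Paley blocks of $f$, the multiplier bound $M(t,i)\lesssim e^{-ct2^{2i}}$ of Lemma \ref{multiplier}, and the time integration producing the $2^{-2i}=2^{-\frac{2}{p}ip}$ shift), and your ``read the arithmetic backwards using invertibility of $e^{t\Delta}$ on a dyadic band at $t\sim 2^{-2i}$'' is precisely Theorem \ref{inequality6}, where $\hat{\phi}'(\xi)e^{|\xi|^2}$ is shown to be an $L^p$-multiplier. Likewise your derivative-reduction step at integer smoothness (using that $D_X^\beta$ and, via the equation, $D_t^l=\Delta^l$ commute with the heat extension) is Theorems \ref{frac2p} and \ref{inequality5}. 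Where you diverge from the paper is in passing from these endpoints to general $\alpha>0$: the paper proves only the two integer-indexed estimates $\mathcal{B}^{-2/p}_p\to L^p({\bf R}^n_T)$ and $\mathcal{B}^{2i-2/p}_p\to W^{2i,i}_p({\bf R}^n_T)$ (and their reverses) and then invokes real interpolation, using Theorem \ref{RBesov} to identify $(L^p({\bf R}^n_T),W^{2i,i}_p({\bf R}^n_T))_{p,\theta}$ with $\mathcal{B}^{\alpha,\frac12\alpha}_p({\bf R}^n_T)$.

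Your alternative route for fractional $\alpha$ has two genuine gaps. First, you estimate the base strip by ``inserting into a parabolic Littlewood--Paley characterization of $\mathcal{B}^{\alpha,\frac12\alpha}_p({\bf R}^n_T)$,'' but no such characterization is available on the finite cylinder: the space is defined there by difference quotients in $X$ and in $t\in(0,T)$, and relating that norm to a frequency decomposition requires exactly the extension-operator and interpolation machinery of Section \ref{sec4} — at which point one may as well interpolate the endpoint bounds, which is what the paper does. You would otherwise have to estimate the second differences in $X$ and the first differences in $t$ of $u$ directly, which is a separate (doable but unwritten) computation. Second, your reduction of $\alpha$ by one unit uses the $D_t^{1/2}$ iteration of Theorem \ref{iterateb}, but that theorem lives on all of ${\bf R}^{n+1}$; on ${\bf R}^n_T$ the paper only establishes the full-time-derivative version (Theorem \ref{iterates3}), which reduces $\alpha$ by $2$ and requires $\alpha\geq 2$, precisely because $D_t^{1/2}$ is nonlocal in $t$ and does not interact well with restriction to $(0,T)$. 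Relatedly, the identity ``$D_t^{1/2}u=$ heat extension of $(-\Delta)^{1/2}f$'' is only heuristic: $|\tau|^{1/2}$ is not a function of $-4\pi^2|\xi|^2+i\tau$, so this needs an actual multiplier argument. Your final paragraph on the range $0<\alpha<2/p$ (checking $\Gamma(X-\cdot,t)\in\mathcal{B}^{-\alpha+2/p}_q({\bf R}^n)$ so the pairing reduces to convolution on each block) is sound and is implicitly what makes the paper's blockwise computations legitimate.
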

The notation $A \approx  B$  means that there are positive constants
$c$ and $C$ independent of $f$ such that $c \leq \frac{A}{B} \leq
C$. Our result can be compared with the result of H. Triebel. In
section 1.8.1 in  \cite{Tr3}, H. Triebel showed that  for $1 < p <
\infty$ and $\al > \frac2p$ and $m > \frac12(\al -\frac2p)$,
\begin{align*}
 \|f  \|^p_{L^p(\R)} + \int_0^\infty
\int_{\R} t^{mp-\frac12p (\al -\frac2p)} | D_t^mu(X,t)|^p dXdt\approx  \|
f\|^p_{{\mathcal B}^{\al - \frac2p}_p (\R)}.
\end{align*}

In this paper, we denote that  $A \lesssim B$ means  that $A \leq c
B$ for positive constant $c$ depending only on $n, p,$ and $T$. We
denote $\hat{\cdot}$ as the Fourier transform in ${\bf R}, \,\, {\bf
R}^n$ or ${\bf R}^{n+1}$.

\section{Parabolic Sobolev and parabolic Besov spaces on ${\bf R}^{n+1}$ }
\setcounter{equation}{0} \label{sec1}
For $\al \in {\bf R}$, we
consider a distribution $H_\al(\xi,\tau)$  whose Fourier transform
in ${\bf R}^{n+1}$ is defined by
\begin{eqnarray*}
\widehat{ H_{\al}} (\xi,\tau) = c_\al(1 + 4\pi^2 |\xi|^2 + i
\tau)^{-\frac{\al}{2}}, \quad (\xi, \tau) \in {\bf R}^n \times {\bf R}.
\end{eqnarray*}
For $\al \in {\bf R}, \,\, 1\leq p \leq \infty$, we define the
parabolic Sobolev space ${\mathcal L}^p_{\al} ({\bf R}^{n+1})$ by
\begin{eqnarray*}
{\mathcal L}^p_\al ({\bf R}^{n+1}) = \{ f \in {\mathcal S}'({\bf
R}^{n+1}) \, | \, f  = H_{\al} * g, \quad \mbox{for some} \quad g   \in L^p ({\bf R}^{n+1}) \}
\end{eqnarray*}
with norm
\begin{align*}
\|f\|_{{\mathcal L}^p_\al ({\bf R}^{n+1})} : = \| g \|_{L^p({\bf R}^{n+1})} \, ( = \|  H_{-\al} * f  \|_{L^p({\bf R}^{n+1})} ),
\end{align*}
 where
$*$ is a convolution in ${\bf R}^{n+1}$ and ${\mathcal S}^{'}({\bf
R}^{n+1})$
 is the dual space of the Schwartz space
${\mathcal S}({\bf R}^{n+1})$. In particular, when $\al =0$, we have that ${\mathcal L}^p_0({\bf R}^{n+1}) = L^p({\bf R}^{n+1})$.

Next, we define a parabolic Besov space. Let $\phi \in {\mathcal
S} ({\bf R}^{n+1})$ such that
\begin{eqnarray*}
 \left\{\begin{array}{rl}
&\hat \phi(\xi,\tau) > 0  \quad \mbox{ on }  2^{-1} < |\xi| + |\tau|^\frac12 < 2 ,\\
& \hat \phi(\xi,\tau)=0 \quad \mbox{ elsewhere }, \\
&\sum_{ -\infty < i < \infty } \hat \phi(2^{-i}\xi, 2^{-2i}\tau) =1 \ (
(\xi,\tau) \neq (0,0)).
\end{array}\right.
\end{eqnarray*}
We define functions  $\phi_i, \,\, \psi \in {\mathcal S}({\bf R}^{n+1})$ whose Fourier transforms are
 written by
\begin{eqnarray}\label{psi1}
\begin{array}{ll}
\widehat{\phi_i}(\xi, \tau) &= \hat \phi(2^{-i} \xi, 2^{-2i} \tau) \quad (i = 0, \pm 1, \pm 2 , \cdots)\\
\widehat{\psi}(\xi, \tau) & = 1- \sum_{i=1}^\infty \hat \phi (2^{-i}
\xi, 2^{-2i} \tau).
\end{array}
\end{eqnarray}
Note that $\phi_i = 2^{(i+2)n} \phi(2^i X, 2^{2i} t)$.
For $\al \in {\bf R}$ we define the parabolic Besov space
${\mathcal B}^{\al,\frac12 \al}_{pq} ({\bf R}^{n+1})$ by
\begin{eqnarray*}
{\mathcal B}^{\al,\frac12 \al}_{pq} ({\bf R}^{n+1}) = \{ f \in {\mathcal
S}^{'}({\bf R}^{n+1}) \, | \, \|f\|_{{\mathcal B}^{\al, \frac12 \al}_{pq}} <
\infty \, \}
\end{eqnarray*}
with the norms
\begin{align*}
 \|f\|_{{\mathcal B}^{\al, \frac12 \al}_{pq}} :&  = \| \psi * f\|_{L^p} + (
\sum_{ 1 \leq i  < \infty} (2^{\al i} \|\phi_i *
f\|_{L^p})^q)^{\frac1q}, \quad 1 \leq q < \infty,\\
 \|f\|_{{\mathcal B}^{\al, \frac12 \al}_{p\infty}}  :& =  \sup (\| \psi * f\|_{L^p} , \,\,  2^{\al i} \|\phi_i *
f\|_{L^p}),
\end{align*} where $*$ is a convolution in ${\bf
R}^{n+1}$. When $p=q$, we simply denote ${\mathcal B}^{\al,\frac12 \al}_{pp} $
by ${\mathcal B}^{\al,\frac12 \al}_p$.

The following properties can be shown by the same argument as for
the usual Sobolev space and the Besov space in $\R$.
\begin{prop}
\label{prop2}
\begin{itemize}
\item[(1)]
 The definition of $ {\mathcal B}^{\al, \frac12 \al}_{pq} ({\bf
R}^{n+1})$ does not depend on the choice of the function $\phi$,
\item[(2)]
 The real interpolation method gives
\begin{eqnarray*}
( {\mathcal L}^{p}_{\al_0} , {\mathcal L}^{p}_{\al_1} )_{\te, q} =
{\mathcal B}^{\al,\frac12 \al}_{pq}
\end{eqnarray*}
for $ 1 \leq p \leq \infty, \, \, \al = (1-\te) \al_0 + \te \al_1,
0 < \te < 1,$ and
\begin{eqnarray*}
({\mathcal B}_{pq_0}^{\al_0,\frac12 \al_0} , {\mathcal B}_{pq_1}^{\al_1,\frac12 \al_1} )_{\te,
r} = {\mathcal B}^{\al,\frac12 \al}_{pr}
\end{eqnarray*}
for $\al_0 \neq \al_1, \, \, 1 \leq p, r, q_0, q_1  \leq \infty ,
\al = (1-\te) \al_0 + \te \al_1$.

\item[(3)]
For $0 < \al < 2$,
the the parabolic Besov norm $\| f\|_{{\mathcal B}^{\al,\frac12 \al}_p}$ is
equivalent to the norm
\begin{align}\label{besovnorm1}
\|f\|_{L^p} & + \Big(\int_{\R}  \int_{{\bf R} \times {\bf R}}
\frac{|f(X,t) - f(X,s)|^p}{|t-s|^{1 + \frac12
p\al}}dtds dX \Big)^{\frac1p} \\
& + \Big(\int_{{\bf R}}  \int_{\R \times \R} \frac{|f(X+Y,t) -2
f(X,t) - f(X-Y,t)|^p}{|Y|^{n + p\al}}dYdX
  dt\Big)^{\frac1p}\nonumber
\end{align}
if $1 \leq p < \infty$;
\begin{align}\label{besovnorm2}
\|f\|_{L^\infty} & + \sup_{X ,t,s, t \neq s} \frac{|f(X,t) -
f(X,s)|}{|t-s|^{ \frac12
p\al}} \\
& + \sup_{t,s, X,Y,Y \neq 0 } \frac{|f(X+Y,t) -2
f(X,t)-f(X-Y,t)|^p}{|Y|^{p\al}}.\nonumber
\end{align}
if $ p = \infty$.

\item[(4)]
The operator $S_\te: {\mathcal L}^p_{\al} \ri {\mathcal L}^p_{\al +\te}, \,\, S_\al f = H_\al * f$ is isomorphism for all $\al, \, \te \in {\bf R}$
  and $1 \leq p \leq \infty$.

\item[(5)]
${\mathcal S}({\bf R}^{n+1})$ is dense subset of ${\mathcal
L}^p_\al({\bf R}^{n+1})$ for all $\al \in {\bf R}$ and $1 \leq p
\leq \infty$.

\item[(6)]
${\mathcal L}^p_{\al_1} ({\bf R}^{n+1}) \subset {\mathcal
L}^p_{\al_2} ({\bf R}^{n+1}) $ for $\al_2 < \al_1$.
 \end{itemize}
\end{prop}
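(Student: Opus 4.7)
These six items are the parabolic counterparts of standard facts in the isotropic theory, and the plan is to transport every classical argument to the anisotropic scaling $(X,t)\mapsto(\ep X,\ep^2 t)$. The one genuinely non-elementary ingredient that should be isolated first is a parabolic Mikhlin--H\"ormander multiplier theorem: if $m\in C^\infty(\RN\setminus\{0\})$ satisfies
$$|\pa_\xi^\be \pa_\tau^k m(\xi,\tau)|\lesssim (|\xi|+|\tau|^{\frac12})^{-|\be|-2k}$$
for sufficiently many multi-indices $(\be,k)$, then convolution with $m^\vee$ is bounded on $L^p(\RN)$ for $1<p<\infty$. This is exactly the anisotropic multiplier theorem treated in \cite{N}, \cite{L}, \cite{DT1}, so it can be quoted.

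With this tool in hand, (4) is essentially algebraic: on the Fourier side $\widehat{H_\al}\widehat{H_\te}$ is a constant multiple of $\widehat{H_{\al+\te}}$, hence $S_\te\circ S_\al$ is a constant multiple of $S_{\al+\te}$ and $S_{-\te}$ is its inverse up to a constant, and the norm identity $\|S_\te f\|_{{\mathcal L}^p_{\al+\te}}=\|f\|_{{\mathcal L}^p_\al}$ is then immediate from the definition. Item (6) reduces to the bound $\|H_{\al_1-\al_2}*g\|_{L^p}\lesssim\|g\|_{L^p}$; the symbol $(1+4\pi^2|\xi|^2+i\tau)^{(\al_2-\al_1)/2}$ is $C^\infty$ on all of $\RN$ (the denominator never vanishes) and its derivatives satisfy the required parabolic decay, so the multiplier theorem applies. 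Item (5) then follows from (4): for $g\in{\mathcal S}(\RN)$ the product $\widehat{H_\al}\widehat{g}$ is Schwartz because $\widehat{H_\al}$ is smooth and grows at most polynomially, hence $H_\al*{\mathcal S}\subset{\mathcal S}$; given $f=H_\al*g\in{\mathcal L}^p_\al$ one approximates $g$ in $L^p$ by $g_k\in{\mathcal S}$ and sets $f_k=H_\al*g_k$.

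For (1) the key observation is that $\mbox{supp}\,\widehat{\phi_i}\subset\{2^{i-1}<|\xi|+|\tau|^{\frac12}<2^{i+1}\}$, so a second admissible family $(\ti\phi_j)$ satisfies $\widehat{\ti\phi_j}\widehat{\phi_i}=0$ whenever $|i-j|>2$; consequently $\ti\phi_j*f=\sum_{|i-j|\leq 2}\ti\phi_j*\phi_i*f$, and Young's inequality together with the scale-invariant bound $\|\ti\phi_j\|_{L^1}=\|\ti\phi\|_{L^1}$ yields a term-by-term comparison of the two Besov norms. For (2) I would construct a retraction/coretraction pair: the Littlewood--Paley projections $f\mapsto(\psi*f,\phi_i*f)_{i\geq 1}$ retract ${\mathcal B}^{\al,\frac12\al}_{pq}(\RN)$ onto the weighted sequence space $\ell^q_{2^{\al i}}(L^p)$, whose real interpolation is classical, while the Besov--Sobolev identity $({\mathcal L}^p_{\al_0},{\mathcal L}^p_{\al_1})_{\te,q}={\mathcal B}^{\al,\frac12\al}_{pq}$ comes from computing the $K$-functional of the pair directly from the dyadic splitting --- a parabolic transcription of the standard argument.

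The longest calculation is (3). For the spatial second difference I would insert $f=\psi*f+\sum_i\phi_i*f$, split the $Y$-integral at $|Y|\sim 2^{-i}$, bound the difference on the short scale by a Taylor expansion using the Bernstein inequality $\|\pa_X^2(\phi_i*f)\|_{L^p}\lesssim 2^{2i}\|\phi_i*f\|_{L^p}$, bound it on the long scale by the trivial $2\|\phi_i*f\|_{L^p}$, and integrate against $|Y|^{-n-p\al}dY$; the resulting geometric series converges exactly because $0<\al<2$ and returns $\sum_i 2^{ip\al}\|\phi_i*f\|_{L^p}^p$. The temporal difference is handled identically with the parabolic splitting $|t-s|\sim 2^{-2i}$ and $\|\pa_t(\phi_i*f)\|_{L^p}\lesssim 2^{2i}\|\phi_i*f\|_{L^p}$, which is where the time exponent $\frac12\al$ appears naturally. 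The $p=\infty$ case of (3) is the same with sums replaced by suprema. The main obstacle throughout is isolating the correct Bernstein and multiplier inequalities in the parabolic framework; once these are in place, each of the six items is a direct transcription of the isotropic theory.
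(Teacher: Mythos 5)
Your overall strategy -- transcribe the isotropic theory to the parabolic dilation via an anisotropic multiplier theorem, a retraction onto $\ell^q_{2^{\al i}}(L^p)$ for the interpolation statements, and a Littlewood--Paley/Bernstein splitting for the difference norms -- is exactly the route the paper has in mind: the paper gives no proof at all, merely remarking that the claims follow ``by the same argument as for the usual Sobolev space and the Besov space'' and citing \cite{BL} for (2), \cite{DT2} for (3), and \cite{BL} again for (4)--(6). So in spirit you are doing what the paper defers to its references, and most of your sketch is sound.

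Two steps, however, need repair as written. First, you route (4)--(6) through the Mikhlin--H\"ormander-type multiplier theorem, which only gives $L^p$-boundedness for $1<p<\infty$, whereas the proposition asserts these items for $1\leq p\leq\infty$. For the endpoints the correct ingredient is not a multiplier bound but the fact that the parabolic Bessel kernel $H_\be$ is an integrable function for $\be>0$ (as in Rao \cite{R}: for $t>0$ one has $H_\be(X,t)=c_\be\,t^{\be/2-1}e^{-t}\,\Ga(X,t)$, which is positive and in $L^1({\bf R}^{n+1})$), after which (6) and the norm bound in (4) follow from Young's inequality for all $1\leq p\leq\infty$. (Relatedly, your density argument for (5) approximates $g$ by Schwartz functions in $L^p$, which is impossible for $p=\infty$; that endpoint of (5) is defective in the statement itself, but you should not present the argument as covering it.) Second, in (3) you only establish one inequality: inserting $f=\psi*f+\sum_i\phi_i*f$ and summing the geometric series shows that the difference seminorms are dominated by $\|f\|_{{\mathcal B}^{\al,\frac12\al}_p}$. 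The converse -- that $2^{\al i}\|\phi_i*f\|_{L^p}$ is controlled by the second spatial difference and the first temporal difference of $f$ -- is the harder half and is not addressed; it requires exploiting that $\widehat{\phi_i}$ vanishes near the origin so that $\phi_i*f$ can be rewritten as an integral of $\De^2_Yf(X,t)$ and $f(X,t)-f(X,s)$ against rescaled kernels with the appropriate moment cancellation (this is the content of Theorem 3 in \cite{DT2}, which the paper cites for precisely this point). Without that direction the equivalence of norms in (3) is not proved.
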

For the details of the proof of Proposition \ref{prop2} we refer
\cite{BL} for $(2)$ (in particular Definition 6.2.2, Theorem 6.2.4
and Theorem 6.4.5 in \cite{BL}), and  refer \cite{DT2} (Theorem 3)
for $(3)$. It is not difficult to derive (4) -(6) (see \cite{BL}).


For the sake of later use, we define  $L^p(\R)$- multiplier (
$L^p({\bf R}^{n+1})$- multiplier) as follows.
\begin{defin}
We say  that $\mu \in {\mathcal S}' (\R)$  is
$L^p(\R)$-multiplier if
\begin{align}\label{multiplier norm}
\| {\mathcal F}^{-1} (\mu \hat f) \|_{L^p (\R)} \leq M \| f\|_{L^p
(\R)}
\end{align}
for all $f \in {\mathcal S}(\R)$, where ${\mathcal F}^{-1} (f)$ is
the inverse Fourier transform of $f$. We call the minimal constant $M$ satisfying \eqref{multiplier norm} $L^p$-mutiplier norm of $\mu$.
\end{defin}

Similarly, we define $L^p({\bf R}^{n+1})$-multiplier. We introduce the Marcinkiewicz multiplier theorem  (see Theorem $4.6^{'}$ in  \cite{S}).
\begin{prop}\label{Marcinkiwitz}
Let $m$ be a bounded function on ${\bf R}^n \setminus \{ 0\}$.
Suppose also
\begin{itemize}
\item[(a)]
$|\mu( \xi)| \leq B$,\\
\item[(b)]
for each $0 < k \leq n$,
\begin{align*}
\sup_{\xi_{k+1}, \cdots \xi_{n}} \int_{\rho} |\frac{\pa^k \mu}{\pa \xi_1 \pa \xi_2  \cdots \pa \xi_k}| d\xi_1 \cdots d\xi_k \leq B
\end{align*}
as $\rho$ ranges over dyadic rectangles of ${\bf R}^k$  (If $k =n$, the " $\sup$ " sign is omitted).
\item[(c)]
The condition analogous to (b) is valid for every for one of the $n
!         $ permutations of the variables $\xi_1, \, \xi_2, \,
\cdots \xi_n$.
\end{itemize}
Then $mu$ is $L^p$-multiplier, $1 < p<\infty$ and the multiplier
norm depend only on  $B, \, p$ and $n$.
\end{prop}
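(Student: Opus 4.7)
The plan is to follow the now-classical Littlewood--Paley approach (essentially the proof in Chapter IV of \cite{S}). Let $\De_{\mathbf{j}}$, $\mathbf{j}\in{\bf Z}^n$, denote the Fourier projection onto the dyadic rectangle $R_{\mathbf{j}} = \prod_{k=1}^{n}\bigl([2^{j_k},2^{j_k+1}]\cup[-2^{j_k+1},-2^{j_k}]\bigr)$. Iterating the one-dimensional Littlewood--Paley inequality coordinate by coordinate yields
\begin{equation*}
\|f\|_{L^p(\R)} \approx \Bigl\| \Bigl(\sum_{\mathbf{j}\in{\bf Z}^n} |\De_{\mathbf{j}} f|^2 \Bigr)^{1/2} \Bigr\|_{L^p(\R)}, \quad 1<p<\infty,
\end{equation*}
and similarly for $T_{\mu}f$. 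Since $T_{\mu}$ commutes with each $\De_{\mathbf{j}}$, it is enough to establish the vector-valued estimate
\begin{equation*}
\Bigl\| \Bigl(\sum_{\mathbf{j}} |T_{\mu}\De_{\mathbf{j}} f|^2 \Bigr)^{1/2} \Bigr\|_{L^p} \lesssim \Bigl\| \Bigl(\sum_{\mathbf{j}} |\De_{\mathbf{j}} f|^2 \Bigr)^{1/2} \Bigr\|_{L^p},
\end{equation*}
which by the usual square-function / Khintchine randomization argument reduces to showing that $\mu\chi_{R_{\mathbf{j}}}$ is an $L^p$-multiplier with norm bounded uniformly in $\mathbf{j}$.

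The key analytic step is to represent $\mu$ on each dyadic rectangle through its mixed partial derivatives. For $\xi\in R_{\mathbf{j}}$ one iterates the fundamental theorem of calculus in each coordinate to write
\begin{equation*}
\mu(\xi) = \sum_{S\subset\{1,\ldots,n\}} (\pm)\int_{Q^{S}_{\mathbf{j}}(\xi)} \frac{\pa^{|S|}\mu}{\prod_{k\in S}\pa\xi_k}(\zeta_S,\xi_{S^c})\,d\zeta_S,
\end{equation*}
where $Q^{S}_{\mathbf{j}}(\xi)$ has the coordinates in $S$ ranging between the corner of $R_{\mathbf{j}}$ nearest the origin and $\xi_k$, while coordinates in $S^c$ are frozen at $\xi_{S^c}$. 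Hypothesis (a) handles the $S=\emptyset$ summand, and hypotheses (b) and (c) together give a uniform $L^1$-bound for each mixed derivative on $R_{\mathbf{j}}$. Plancherel on each rectangle then yields the uniform $L^2$-multiplier bound, while each nonempty-$S$ summand can be rewritten (using Fubini) as a composition of one-dimensional Hilbert-transform-type operators in the $S^c$ variables with bounded averaging in the $S$ variables, so that $L^p$-boundedness for $1<p<\infty$ follows from the classical $L^p$-boundedness of the Hilbert transform.

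The main obstacle is the bookkeeping of the $2^n$ boundary terms produced by the $n$-fold integration by parts: one must verify that every summand is controlled by hypothesis (b) applied to \emph{some} ordering of the partial derivatives, and this is precisely where the permutation-symmetric assumption (c) is indispensable. Once the uniform restricted-multiplier bound is obtained, combining it with the square-function inequality and the Littlewood--Paley equivalence displayed above closes the argument and produces the desired multiplier norm depending only on $B$, $p$ and $n$.
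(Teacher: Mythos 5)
The paper does not actually prove this proposition: it is quoted (typos and all) from Theorem $4.6'$ of \cite{S}, so your sketch must be measured against the classical Littlewood--Paley proof in that reference rather than against any argument in the present paper. Your overall strategy --- the dyadic-rectangle Littlewood--Paley equivalence $\|f\|_{L^p}\approx\|(\sum_{\mathbf j}|\De_{\mathbf j}f|^2)^{1/2}\|_{L^p}$, followed by a representation of $\mu$ on each rectangle through iterated integration of its mixed partial derivatives, with (a) controlling the constant term and (b)--(c) controlling the $2^n$ boundary terms --- is indeed the strategy of that source.

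There is, however, a genuine gap in your reduction step. You assert that the needed vector-valued bound $\|(\sum_{\mathbf j}|T_\mu\De_{\mathbf j}f|^2)^{1/2}\|_{L^p}\lesssim\|(\sum_{\mathbf j}|\De_{\mathbf j}f|^2)^{1/2}\|_{L^p}$ ``reduces by Khintchine randomization to showing that $\mu\chi_{R_{\mathbf j}}$ is an $L^p$-multiplier with norm bounded uniformly in $\mathbf j$.'' That reduction is false as stated: Khintchine converts the square function into an average over signs of $\|\sum_{\mathbf j}\ep_{\mathbf j}T_\mu\De_{\mathbf j}f\|_{L^p}$, and uniform bounds on the \emph{individual} operators do not control these randomized sums; one needs a genuinely $\ell^2$-valued inequality. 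What closes the argument in \cite{S} is that the fundamental-theorem-of-calculus representation exhibits each $T_\mu\De_{\mathbf j}$ as an average, of total mass at most $C_nB$ by (a)--(c), of Fourier projections $S_\rho$ onto subrectangles $\rho\subset R_{\mathbf j}$; Minkowski's inequality then reduces everything to the vector-valued estimate $\|(\sum_{\mathbf j}|S_{\rho_{\mathbf j}}f_{\mathbf j}|^2)^{1/2}\|_{L^p}\lesssim\|(\sum_{\mathbf j}|f_{\mathbf j}|^2)^{1/2}\|_{L^p}$, uniform over all choices of rectangles $\rho_{\mathbf j}$, which rests on the Marcinkiewicz--Zygmund ($\ell^2$-valued) extension of the Hilbert transform bound rather than on its scalar form. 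Your sketch invokes the Hilbert transform only to obtain scalar $L^p$ bounds for each summand, so the vector-valued step --- the heart of the proof --- is missing; with that step inserted, the rest of your outline is sound. (Your remark that Plancherel gives the $L^2$ case is also superfluous, since hypothesis (a) alone gives the $L^2$ bound.)
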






We denote by $D^i_{X_k}, \, i \in {\bf N} \cup \{ 0\} $  the $i$ times  derivatives with respect to $X_k$.
When $i =1$, we denote $D^1_{X_k} = D_{X_k}$.  We also denote
$D^{\be}_{X}$  by  the $D_{X_1}^{\be_1} \cdots D_{X_n}^{\be_n}$   for $\be \in ({\bf N} \cup \{0\})^n$.
We denote by $D^\frac12_t $ the pseudo-differential operator whose Fourier
transform is defined by $\widehat{D^\frac12_t f}(\tau) =
|\tau|^{\frac12} \hat{f} (\tau)$ for complex-valued function $f$.
It is well-known that
\begin{align}\label{half}
D^{\frac12}_t f(t) = c \int_{{\bf R}} \frac{f(t) -
f(s)}{|t-s|^{\frac32}} ds
\end{align}
for complex-value function $f$.   For non-negative integer, we also denote $ D^{ i}_t f$  by $i$ times derivatives of
$f$  and $ D^{ i +  \frac12}_t f$  by $D^\frac12_t D^{i}_t f$, respectively.
Note that  $ D_tf = HD^\frac12 D^\frac12 f$.

\section{The properties of parabolic Sobolev and parabolic Besov spaces }
\setcounter{equation}{0} \label{sec1-2}

In this section, we study the properties  of parabolic Sobolev and
parabolic Besov spaces.

\begin{theo}\label{iterates}
Let $ 1 < p < \infty$ and $\al \in {\bf R}$. Then $f \in {\mathcal
L}^p_\al ({\bf R}^{n+1})$ if and only if $f, D_{X_k} f,
D_t^{\frac12} f \in {\mathcal L}^p_{\al -1} ({\bf R}^{n+1})$ for
all $1 \leq k \leq n$. Furthermore,
\begin{eqnarray}\label{Iterates}
\|f\|_{ {\mathcal L}^p_\al}\approx \| f\|_{{\mathcal L}^p_{\al
-1}} + \sum_{ 1 \leq k \leq n}\|D_{X_k} f\|_{{\mathcal
L}^p_{\al-1}} + \|D_t^{\frac12} f\|_{{\mathcal L}^p_{\al-1}}.
\end{eqnarray}
\end{theo}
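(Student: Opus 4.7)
The plan is a pure Fourier-multiplier argument, using the parabolic Marcinkiewicz theorem stated as Proposition~\ref{Marcinkiwitz} as the single workhorse. Write $\si(\xi,\tau)=(1+4\pi^{2}|\xi|^{2}+i\tau)^{1/2}$, with the principal branch (well defined since $1+4\pi^{2}|\xi|^{2}+i\tau$ avoids the non-positive reals). By definition, $\|f\|_{{\mathcal L}^{p}_{\al}}\approx\|{\mathcal F}^{-1}(\si^{\al}\hat f)\|_{L^{p}}$, and likewise with $\al$ replaced by $\al-1$. So the theorem reduces to comparing, uniformly in $\al$, the symbol $\si^{\al}\hat f$ with the three symbols $\si^{\al-1}\hat f$, $(2\pi i\xi_{k})\si^{\al-1}\hat f$, and $|\tau|^{1/2}\si^{\al-1}\hat f$, which are the Fourier transforms of (constant multiples of) $H_{-\al+1}*f$, $H_{-\al+1}*D_{X_k}f$, and $H_{-\al+1}*D_{t}^{1/2}f$ respectively.

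The core step is the algebraic identity obtained from $\si^{2}=1+4\pi^{2}|\xi|^{2}+i\tau$ by dividing through by $\si$:
$$\si \;=\;\tfrac{1}{\si}\;+\;\sum_{k=1}^{n}\Bigl(-\tfrac{2\pi i\xi_{k}}{\si}\Bigr)\,(2\pi i\xi_{k})\;+\;\tfrac{i\,\mathrm{sgn}(\tau)|\tau|^{1/2}}{\si}\cdot |\tau|^{1/2}.$$
Multiplying by $\si^{\al-1}\hat f$ yields
$$\si^{\al}\hat f \;=\; M_{0}\,\widehat{H_{-\al+1}*f}\;+\;\sum_{k=1}^{n}M_{k}\,\widehat{H_{-\al+1}*D_{X_k}f}\;+\;M_{t}\,\widehat{H_{-\al+1}*D_{t}^{1/2}f},$$
where $M_{0}=\si^{-1}$, $M_{k}=-2\pi i\xi_{k}/\si$, and $M_{t}=i\,\mathrm{sgn}(\tau)|\tau|^{1/2}/\si$ are three fixed symbols independent of $\al$, $f$, and $p$. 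Thus the forward inequality $\lesssim$ in \eqref{Iterates} follows once these three symbols are shown to be $L^{p}({\bf R}^{n+1})$-multipliers.

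To verify this via Proposition~\ref{Marcinkiwitz}, note first that $|\si|^{2}=\sqrt{(1+4\pi^{2}|\xi|^{2})^{2}+\tau^{2}}\gtrsim 1+|\xi|^{2}+|\tau|$, so each of $M_{0},M_{k},M_{t}$ is bounded by a constant. For the mixed-derivative integrals on dyadic rectangles, the smooth factor $\si^{-1}$ satisfies the parabolic Mihlin-type bounds $|\pa_{\tau}^{\ga_{0}}\pa_{\xi_{1}}^{\ga_{1}}\cdots\pa_{\xi_{n}}^{\ga_{n}}\si^{-1}|\lesssim|\si|^{-1}|\tau|^{-\ga_{0}}\prod_{j}|\xi_{j}|^{-\ga_{j}}$, which upon dyadic integration yield a uniform bound. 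The delicate factor is $|\tau|^{1/2}$ in $M_{t}$, non-smooth at $\tau=0$; but a dyadic $\tau$-interval $|\tau|\sim 2^{j}$ stays away from $0$, where $|\tau|^{1/2}$ is smooth, its $\ga_{0}$-th $\tau$-derivative is of order $|\tau|^{1/2-\ga_{0}}$, and the companion factor $|\si|^{-1}\lesssim 2^{-j/2}$ absorbs the $2^{j/2}$ coming from the integral $\int_{|\tau|\sim 2^{j}}|\tau|^{-1/2}\,d\tau$. This is the main technical obstacle, but it is essentially the same anisotropic bookkeeping used for Bessel-type parabolic symbols elsewhere in the literature.

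The reverse inequality is obtained from the same three symbols read in the opposite direction: from $\si^{\al-1}\hat f=\si^{-1}\cdot\si^{\al}\hat f$, $(2\pi i\xi_{k})\si^{\al-1}\hat f=(2\pi i\xi_{k}/\si)\cdot\si^{\al}\hat f$, and $|\tau|^{1/2}\si^{\al-1}\hat f=(|\tau|^{1/2}/\si)\cdot\si^{\al}\hat f$, applying Proposition~\ref{Marcinkiwitz} to $\si^{-1}$, $\xi_{k}/\si$, and $|\tau|^{1/2}/\si$ gives $\|f\|_{{\mathcal L}^{p}_{\al-1}}+\sum_{k}\|D_{X_k}f\|_{{\mathcal L}^{p}_{\al-1}}+\|D_{t}^{1/2}f\|_{{\mathcal L}^{p}_{\al-1}}\lesssim\|f\|_{{\mathcal L}^{p}_{\al}}$. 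Combining the two directions proves \eqref{Iterates}, and the equivalence of the two conditions on $f$ follows immediately by density of ${\mathcal S}$ in ${\mathcal L}^{p}_{\al-1}$ (Proposition~\ref{prop2}(5)).
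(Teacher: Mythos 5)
Your proof is correct and follows the same strategy as the paper: both directions reduce to showing that a handful of fixed symbols are $L^p({\bf R}^{n+1})$-multipliers via Proposition \ref{Marcinkiwitz}, and the easy direction (each of the three norms bounded by $\|f\|_{{\mathcal L}^p_\al}$) is handled identically, using the multipliers $\si^{-1}$, $2\pi \xi_k/\si$ and $|\tau|^{1/2}/\si$ with $\si=(1+4\pi^2|\xi|^2+i\tau)^{1/2}$. The one genuine difference is in the converse direction: you split $\si=\si^{-1}+\sum_k(-2\pi i\xi_k/\si)(2\pi i\xi_k)+(i\,\mathrm{sgn}(\tau)|\tau|^{1/2}/\si)\,|\tau|^{1/2}$ directly, which puts the Hilbert-transform factor $\mathrm{sgn}(\tau)$ into the third multiplier, whereas the paper passes through the intermediate symbol $1+|\xi|+|\tau|^{1/2}$, writing ${\mathcal F}^{-1}\big((1+|\xi|+|\tau|^{1/2})\hat f\big)=f+\sum_kR_k\frac{\pa f}{\pa x_k}+D_t^{1/2}f$ with Riesz transforms $R_k$ and then applying the single multiplier $\hat K=\si/(1+|\xi|+|\tau|^{1/2})$. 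The two decompositions are interchangeable: yours trades the Riesz transforms in $X$ for a signum in $\tau$, and both extra factors are harmless on dyadic rectangles, as your bookkeeping for $|\tau|^{1/2}/\si$ near $|\tau|\sim 2^j$ correctly shows. A second, cosmetic difference is that the paper first proves the case $\al=1$ and then lifts to general $\al$ via the isomorphism $S_{\al-1}$ of Proposition \ref{prop2}(4), using that it commutes with $D_{X_k}$ and $D_t^{1/2}$, whereas you carry the factor $\si^{\al-1}$ along from the start; since your three multipliers are independent of $\al$, this is equally legitimate and arguably cleaner.
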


\begin{proof}\
First, we assume $\al =1$. Suppose  $f \in {\mathcal
L}^p_{1} ({\bf R}^{n+1})$ so that $f = H_{1} * g$ for some $g
\in L^p({\bf R}^{n+1})$. Then,  for $1 \leq k \leq n$,  we have
\begin{align}\label{0620}
\begin{array}{ll}\vspace{2mm}
\widehat{D_{X_k} f }
= \frac{-2\pi \xi_k}{(1 +4 \pi^2|\xi|^2 + i \tau)^{\frac12}}
                                   \hat g, \quad
\widehat{D^t_{\frac12}  f}
=  \frac{|\tau|^{\frac12}}{(1 + 4 \pi^2|\xi|^2 + i \tau)^{\frac12}}
            \hat{g}.
\end{array}
\end{align}
Applying Proposition \ref{Marcinkiwitz}, we have that $ \mu_{1k} (\xi, \tau)
= \frac{-2\pi \xi_k}{(1 + 4 \pi^2|\xi|^2 + i \tau)^{\frac12}} ,
\,\, \mu_2(\xi, \tau)=\frac{|\tau|^{\frac12}}{(1 + 4 \pi^2 |\xi|^2
+ i \tau)^{\frac12}} $ are $L^p({\bf R}^{n+1})$ multipliers for $1
< p < \infty$. Then,  from \eqref{0620}, we  get
\begin{align*}
\|D_{X_k} f\|_{L^p} &= \|{\mathcal F}^{-1}(  \mu_{1k}(\xi, \tau) \hat g )\|_{L^p} \lesssim
 \|g\|_{L^p} =  \|f\|_{{\mathcal L}^p_1}  \quad 1 \leq k \leq n,\\
\|D_t^{\frac12} f\|_{  L^p}& = \| {\mathcal F}^{-1}(  \mu_2(\xi,
\tau) \hat g ) \|_{L^p} \lesssim \|g\|_{L^p} = \|f\|_{{\mathcal
L}^p_1}.
\end{align*}
From (6) in Proposition \ref{prop2}, we obtain $\| f\|_{L^p}
\lesssim\| f\|_{{\mathcal L}^p_1}$. Hence, we proved the one-side of
Theorem \ref{iterates}.

Now, we prove the converse inequality. Suppose
$ f, \,\, D^\frac12_t f , \,\, D_{X_k} f\in L^p({\bf R}^{n+1}), \,\, 1 \leq k \leq n$.
We claim that $f = H_1 * g$ for some $g \in L^p({\bf R}^{n+1})$ satisfying
\begin{align}\label{0627}
\| g\|_{L^p} \lesssim   \big( \| f\|_{L^p} + \sum_{1\leq k\leq n}
\|D_{X_k} f\|_{L^p} + \| D^\frac12_t f\|_{L^p} \big).
\end{align}
If then, $f = H_1 * g\in {\mathcal L}^p_1 ({\bf R}^{n+1}) $ with $\|
f\|_{{\mathcal L}^p_1}\lesssim\big( \| f\|_{L^p} + \sum_{1\leq k\leq
n} \|D_{X_k} f\|_{L^p} + \| D^\frac12_t f\|_{L^p} \big)$, and  this
will complete the proof of Theorem \ref{iterates}.

To prove the claim, let us  $R_k, \,\, 1 \leq k \leq n$ be Riesz transforms in $\R$. Then, we have
\begin{align*}
{\mathcal F}^{-1}(( 1 + |\xi| + |\tau|^{\frac12 }) \hat f) = f +
\sum_{1 \leq k \leq n} R_k \frac{\pa f}{\pa x_k} + D^{\frac12}_t f
\in L^p ({\bf R}^{n+1}).
\end{align*}
 Set $\hat K (\xi,\tau) = \frac{(1 + 4 \pi^2|\xi|^2 +
i\tau)^{\frac12}}{1 + |\xi| + |\tau|^{\frac12}}$  and $g = K
*\Big(f + \sum_{1 \leq k \leq n} R_k \frac{\pa f}{\pa x_k} +
D^{\frac12}_t f \Big) $.  Applying
Proposition \ref{Marcinkiwitz}, we have that  $ \hat K (\xi,\tau)$ is
$L^p({\bf R}^{n+1})$-multiplier. Hence we have $g \in L^p({\bf
R}^{n+1})$. Hence, \eqref{Iterates} holds for $\al =1$.

For general $\al \in {\bf R}$, by  (4) in  Proposition \ref{prop2},
we have that $S_{\al -1}:{\mathcal L}^p_1 \ri {\mathcal L}^p_\al$
and $S_{\al-1}: L^p \ri {\mathcal L}^p_{\al-1}$ are isomorphism
whose inverses are $ S^{-1}_{\al -1} = S_{-\al +1}$. Note that
$D_{X_k} S_{-\al +1} f =  S_{-\al +1} D_{X_k} f $ and  $
D^\frac12_tS_{-\al +1} f = S_{-\al +1}  D^\frac12_t f$. Hence, we
get
\begin{align*}
f \in {\mathcal L}^p_\al &   \Leftrightarrow  S^{-1}_{\al -1 } f = S_{-\al +1} f \in {\mathcal L}^p_1\\
 &  \Leftrightarrow  S_{-\al +1} f, \,\,   D_{X_k} S_{-\al +1} f (= S_{-\al +1} D_{X_k}  f),  \,\,   D^\frac12_tS_{-\al +1} f (= S_{-\al +1}   D^\frac12_t f) \in L^p\\
 &  \Leftrightarrow f, \,\, D_{X_k} f, \,\, D^\frac12_t f \in {\mathcal L}^p_{\al-1}.
\end{align*}
Hence, we complete the proof of \eqref{Iterates}.
\end{proof}

\begin{coro}\label{iterates2}
Let $ 1 < p < \infty$ and $\al \in {\bf R}$. Then $f \in {\mathcal
L}^p_2 ({\bf R}^{n+1})$ if and only if $f, \,
D_{X_k} D_{X_l} f, \,D_t f \in   L^p
({\bf R}^{n+1})$ for all $1 \leq k,l \leq n$. Furthermore,
\begin{align}
\begin{array}{ll}\vspace{2mm}
\|f\|_{ {\mathcal L}^p_\al} \approx  \| f \|_{{\mathcal L}^p_{\al
-2}}
+ \sum_{0 \leq k,l\leq n}\|D_{X_k} D_{X_l} f\|_{{\mathcal
L}^p_{\al-2}} + \|D_ f\|_{{\mathcal L}^p_{\al-2}}.
\end{array}
\end{align}
\end{coro}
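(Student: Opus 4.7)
The plan is to mirror the argument used in the proof of Theorem \ref{iterates}, but now at the level $\alpha = 2$, where things simplify considerably because $\widehat{H_{-2}}(\xi,\tau) = c_{-2}(1 + 4\pi^2|\xi|^2 + i\tau)$ is a polynomial in $(\xi,\tau)$. The general $\alpha \in \mathbf{R}$ case is then obtained from the isomorphism $S_{\alpha-2}$ of Proposition \ref{prop2}(4).

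For the forward direction at $\alpha = 2$, suppose $f = H_2 * g$ with $g \in L^p$. Then
\[
\widehat{D_{X_k}D_{X_l} f}(\xi,\tau) = \frac{-4\pi^2 c_2 \xi_k \xi_l}{1 + 4\pi^2|\xi|^2 + i\tau}\,\hat g(\xi,\tau),\qquad \widehat{D_t f}(\xi,\tau) = \frac{i c_2 \tau}{1 + 4\pi^2|\xi|^2 + i\tau}\,\hat g(\xi,\tau).
\]
Both multipliers are bounded on $\mathbf{R}^{n+1}$ and satisfy the Marcinkiewicz hypotheses of Proposition \ref{Marcinkiwitz}, hence are $L^p$ multipliers for $1<p<\infty$. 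Together with the inclusion $\|f\|_{L^p} \lesssim \|f\|_{\mathcal{L}^p_2}$ from Proposition \ref{prop2}(6), this gives $\|f\|_{L^p} + \sum_{k,l}\|D_{X_k}D_{X_l}f\|_{L^p} + \|D_t f\|_{L^p} \lesssim \|f\|_{\mathcal{L}^p_2}$.

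For the converse at $\alpha = 2$, assume $f, D_{X_k}D_{X_l} f, D_t f \in L^p$. The algebraic identity $1 + 4\pi^2|\xi|^2 + i\tau = 1 + \sum_{k=1}^n 4\pi^2 \xi_k^2 + i\tau$ translates directly into $g := H_{-2} * f = c_{-2}(f - \Delta f + D_t f)$ in the distributional sense, so $g \in L^p$ with $\|g\|_{L^p} \lesssim \|f\|_{L^p} + \sum_k \|D_{X_k}^2 f\|_{L^p} + \|D_t f\|_{L^p}$. Since $H_2 * H_{-2} = \delta$ by Proposition \ref{prop2}(4), we recover $f = H_2 * g \in \mathcal{L}^p_2$ with $\|f\|_{\mathcal{L}^p_2} = \|g\|_{L^p}$, completing the equivalence at $\alpha = 2$.

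For general $\alpha \in \mathbf{R}$, exactly as in the final step of the proof of Theorem \ref{iterates}, the map $S_{\alpha-2}$ furnishes isomorphisms $\mathcal{L}^p_2 \to \mathcal{L}^p_\alpha$ and $L^p \to \mathcal{L}^p_{\alpha-2}$ (Proposition \ref{prop2}(4)); it commutes with the Fourier multiplier operators $D_{X_k}D_{X_l}$ and $D_t$. Pushing the $\alpha=2$ equivalence through $S_{\alpha-2}$ yields the stated norm equivalence for arbitrary $\alpha$. The only delicate point in the whole argument is verifying the Marcinkiewicz hypothesis for the two symbols displayed above, but since they are smooth rational functions in $(\xi,\tau)$, the required dyadic estimates on iterated partial derivatives are elementary and entirely analogous to those used implicitly in the proof of Theorem \ref{iterates}.
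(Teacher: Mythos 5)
Your argument is correct, but it takes a genuinely different route from the paper's in both directions. The paper proves the forward implication by applying Theorem \ref{iterates} twice, which produces the six-term equivalence \eqref{equiv3-1} involving not only $f$, $D_{X_k}D_{X_l}f$, $D_tf$ but also the intermediate quantities $D_{X_k}f$, $D_t^{\frac12}f$ and $D_t^{\frac12}D_{X_k}f$; for the converse it then has to dominate those intermediate half-order terms by the pure second-order data, which it does with three further Marcinkiewicz multipliers $\nu_1,\nu_{2,k},\nu_{3,k}$ before feeding the result back into \eqref{equiv3-1}. You instead attack $\al=2$ head-on: your forward direction is a direct application of Proposition \ref{Marcinkiwitz} to the two symbols $\xi_k\xi_l(1+4\pi^2|\xi|^2+i\tau)^{-1}$ and $\tau(1+4\pi^2|\xi|^2+i\tau)^{-1}$, and --- the nicest point --- your converse needs no multiplier theorem at all, since $\widehat{H_{-2}}$ is the polynomial $c_{-2}(1+4\pi^2|\xi|^2+i\tau)$, so that $g=H_{-2}*f=c_{-2}(f-\De f+cD_tf)$ lies in $L^p$ by the triangle inequality alone and $f=H_2*g$ recovers membership in ${\mathcal L}^p_2$. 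Your route is shorter and more self-contained (the converse half in fact goes through for all $1\le p\le\infty$, the restriction $1<p<\infty$ being needed only for the multiplier step in the forward half, consistent with Remark 3.3(2)); the paper's route is longer but delivers strictly more, namely $L^p$ control of $D_{X_k}f$, $D_t^{\frac12}f$ and $D_t^{\frac12}D_{X_k}f$ by $\|f\|_{{\mathcal L}^p_2}$, information the corollary's statement does not record but which the iterated use of Theorem \ref{iterates} makes available. Both proofs reduce general $\al$ to $\al=2$ by conjugating with the isomorphism $S_{\al-2}$ of Proposition \ref{prop2}(4) in the same way, so that final step is common to the two arguments.
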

\begin{proof}
As the proof of Theorem \ref{iterates}, it suffices to show the Corollary when $\al =2$.
Suppose  $f \in {\mathcal L}^2_p({\bf R}^{n+1})$. Since ${\mathcal L}_0^p({\bf R}^{n+1})= L^p({\bf R}^{n+1})$,
applying the Theorem \ref{iterates}  two times, we have
 \begin{align}\label{equiv3-1}
 \begin{array}{ll}\vspace{2mm}
  & \|   f\|_{ {\mathcal L}^p_2} \approx
 \| f \|_{ L^p }
+ \sum_{0 \leq k\leq n}\|D_{X_k } f\|_{L^p }
+ \|D_t^\frac12 f\|_{L^p }
+ \sum_{0 \leq k,l\leq n}\| D_{X_kX_l } f\|_{ L^p } \\
& \hspace{30mm} + \sum_{0 \leq k\leq n}\|D_t^\frac12 D_{X_k} f\|_{ L^p }
+ \|D_t  f\|_{ L^p }.
\end{array}
 \end{align}
 Hence, if $f \in  {\mathcal L}^p_2({\bf R}^{n+1})$, then we have
\begin{align*}
 \| f \|_{L^p }
+ \sum_{0 \leq k,l\leq n}\|D_{X_k} D_{X_l} f\|_{L^p } + \|\frac{\pa
f}{\pa t}\|_{L^p } \lesssim \|f\|_{ {\mathcal L}^p_2}.
\end{align*}
Conversely, suppose that $ f,  \,\, D_{X_k} D_{X_l} f, \, D_t f \in
L^p ({\bf R}^{n+1})$. Note that applying Proposition
\ref{Marcinkiwitz},  we have that  $\nu_1(\xi,
\tau)=\frac{|\tau|^\frac12}{1 + 4\pi^2|\xi|^2 + i\tau}, \,\,
\nu_{2,k}(\xi, \tau)=\frac{2\pi i \xi_k}{1 + 4\pi^2|\xi|^2 + i\tau},
 \,\, \nu_{3,k}(\xi, \tau) =\frac{2\pi i \xi_k |\tau|^\frac12}{1 + 4\pi^2|\xi|^2 + i\tau},$  $ 1 < p < \infty     $ are $L^p({\bf R}^{n+1})$-multipliers.
 Then, we have
 \begin{align} \label{0628}
 \begin{array}{ll}\vspace{2mm}
 &\widehat{D_t^\frac12 f} =  \nu_1(\xi, \tau) (1 + 4\pi^2|\xi|^2 + i\tau) \hat f, \,\,
 \widehat{D_{X_k} f} =  \nu_{2,k}(\xi, \tau) (1 + 4\pi^2|\xi|^2 + i\tau) \hat f,\\
 & \hspace{30mm}  \widehat{D^\frac12_t D_{X_k} f} = \nu_{3,k}(\xi, \tau) (1 + 4\pi^2|\xi|^2 + i\tau) \hat f.
 \end{array}
 \end{align}
 Note that $ {\mathcal F}^{-1}((1 + 4\pi^2|\xi|^2 + i\tau) \hat f )  = f + \sum_{1 \leq k \leq n} D^2_{X_k} f + D_t f $.
 Hence, from \eqref{0628}, we have
\begin{align}\label{0620-2}
\| D^\frac12_{t} f\|_{ L^p}  + \| D_{X_k} f\|_{ L^p}  + \|
D^\frac12_{t}  D_{X_k}f\|_{  L^p}     \lesssim \big( \|  f\|_{ L^p}
+ \| D_{t} f\|_{  L^p}  + \sum_{1 \leq k, l \leq n}\| D_{X_k} D_{
X_l }f\|_{  L^p}\big).
\end{align}
With   \eqref{equiv3-1}, \eqref{0620-2} and the assumption,  this implies
\begin{align*}
\|f\|_{ {\mathcal L}^p_2} \lesssim\big( \| f \|_{L^p} + \sum_{0 \leq
k,l\leq n}\|D_{X_k} D_{X_l} f\|_{ L^p} + \|D_t f\|_{  L^p}\big).
\end{align*}
Hence, we completed the proof of Corollary \ref{iterates2}.
\end{proof}

Now, we define parabolic Sobolev space $ \tilde W^{\al, \frac12
\al}_p({\bf R}^{n+1})$ and  $W^{2\al,   \al}_p({\bf R}^{n+1})$ for
positive integer $\al$ and $1 \leq p \leq \infty$ by
\begin{align*}
\tilde W^{\al, \frac12 \al}_p({\bf R}^{n+1})  :& =   \{ f \in L^p({\bf
R}^{n+1}) \, | \,  \,\, D_X^\be  D^{\frac{l}2}_t f  \in L^p({\bf
R}^{n+1}), \quad |\be| +  l   \leq \al \,\, \},\\
 W^{2\al,   \al}_p({\bf R}^{n+1})  : &=   \{ f \in L^p({\bf
R}^{n+1}) \, | \,  \,\, D_X^\be  D^l_t f  \in L^p({\bf
R}^{n+1}), \quad |\be| + 2 l   \leq 2\al \,\, \}
\end{align*}
with norms
\begin{align*}
 \|f\|_{\tilde W^{\al, \frac12 \al}_p }: =   \sum_{|\be| + \frac12l \leq \al} \| D^\be_X D^{\frac12 l}_t f\|_{L^p}, \quad
 \|f\|_{W^{ 2\al, \al}_p }: =   \sum_{|\be| +  l \leq 2\al} \| D^\be_X D^{  l}_t f\|_{L^p}.
\end{align*}

\begin{rem}
\begin{itemize}
\item[(1)]
From the Theorem \ref{iterates} and Corollary \ref{iterates2},
 if $\al$ is non-negative integer and $1 < p < \infty$, then we have
\begin{align}
{\mathcal L}^p_\al ({\bf R}^{n+1}) =  \tilde W^{\al, \frac12 \al}_p ({\bf R}^{n+1}),
\quad  {\mathcal L}^p_{2\al} ({\bf R}^{n+1}) =  \tilde W^{2\al,    \al}_p ({\bf R}^{n+1}) = W^{2\al,    \al}_p ({\bf R}^{n+1})
\end{align}
with the equivalent norms.

\item[(2)]When $p = 1$ or $p= \infty$, the spaces ${\mathcal L}^p_\al ({\bf R}^{n+1})$
and $\tilde W^{\al, \frac12 \al}_p ({\bf R}^{n+1}) $ are different spaces, and  ${\mathcal L}^p_{2\al} ({\bf R}^{n+1})$,
$ \tilde W^{2\al,   \al}_p ({\bf R}^{n+1}) $
and $  W^{2\al,    \al}_p ({\bf R}^{n+1})$ are different spaces each other.
\end{itemize}
\end{rem}

Next, we study about the properties of parabolic Besov spaces.
\begin{theo}\label{iterateb}
Let $ 1 \leq p \leq \infty$ and $\al \in {\bf R}$. Then $f \in
{\mathcal B}^{\al, \frac12 \al}_p ({\bf R}^{n+1})$ if and only if $f, D_{X_k} f,
D_t^{\frac12} f \in {\mathcal B}_p^{\al -1, \frac12 \al -\frac12} ({\bf R}^{n+1})$ for
all $1 \leq k \leq n$. Furthermore,
\begin{eqnarray}\label{iteratedb}
\|f\|_{ {\mathcal B}^{\al, \frac12 \al}_p} \approx  \|f\|_{{\mathcal B}^{\al-1, \frac12 \al -\frac12}_p}
+ \sum_{1 \leq k \leq n}\|D_{X_k} f\|_{{\mathcal B}^{\al-1, \frac12 \al -\frac12}_p} +
\|D^{\frac12}_tf\|_{{\mathcal B}^{\al-1, \frac12 \al -\frac12}_p}.
\end{eqnarray}
\end{theo}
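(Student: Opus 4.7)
The plan is to imitate the proof of Theorem \ref{iterates} but to work at the level of each parabolic Littlewood--Paley block, so that the argument is uniform in $1\le p\le\infty$ (the Marcinkiewicz theorem of Proposition \ref{Marcinkiwitz} is not available at the endpoints $p=1,\infty$). Fix an auxiliary Schwartz function $\tilde\phi$ with $\widehat{\tilde\phi}\equiv 1$ on $\mathrm{supp}(\hat\phi)$ and $\widehat{\tilde\phi}$ supported in a slightly fatter parabolic annulus, and let $\tilde\phi_i$ be its parabolic dilates, so that $\widehat{\tilde\phi_i}\equiv 1$ on $\mathrm{supp}(\widehat{\phi_i})$ and therefore $\phi_i*f=\phi_i*\tilde\phi_i*f$. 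Moving the derivatives onto the mollifier,
\begin{align*}
\phi_i*D_{X_k}f=(D_{X_k}\tilde\phi_i)*(\phi_i*f),\qquad \phi_i*D^{1/2}_t f=(D^{1/2}_t\tilde\phi_i)*(\phi_i*f).
\end{align*}
Young's inequality reduces the forward direction to the parabolic Bernstein-type bounds $\|D_{X_k}\tilde\phi_i\|_{L^1}\lesssim 2^i$ and $\|D^{1/2}_t\tilde\phi_i\|_{L^1}\lesssim 2^i$: the first is a routine rescaling (both $D_{X_k}$ and $D^{1/2}_t$ are of parabolic order $1$), the second uses the spectral localization of $\widehat{\tilde\phi_i}$ in the annulus $|\tau|^{1/2}\lesssim 2^i$. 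Multiplying by $2^{(\al-1)i}$, taking $\ell^p$ in $i$, and treating the low-frequency block $\psi*D_{X_k}f=(D_{X_k}\tilde\psi)*(\psi*f)$ in the same way, one obtains the forward half of \eqref{iteratedb}; the trivial embedding $\mathcal B^{\al,\frac12\al}_p\hookrightarrow\mathcal B^{\al-1,\frac12\al-\frac12}_p$ is immediate from $2^{(\al-1)i}\le 2^{\al i}$.

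For the converse, I would invoke the parametrix identity used in the proof of Theorem \ref{iterates},
\begin{align*}
f=c^{-1}\,H_1*K*\Bigl(f+\sum_{k=1}^n R_k D_{X_k}f+D^{1/2}_t f\Bigr),\qquad \widehat{K}(\xi,\tau)=\frac{(1+4\pi^2|\xi|^2+i\tau)^{1/2}}{1+|\xi|+|\tau|^{1/2}},
\end{align*}
where $R_k$ are the spatial Riesz transforms. It then suffices to establish three Besov mapping properties: (i) each $R_k$ preserves $\mathcal B^{\al-1,\frac12\al-\frac12}_p$; (ii) convolution with $K$ preserves $\mathcal B^{\al-1,\frac12\al-\frac12}_p$; (iii) convolution with $H_1$ is a lifting isomorphism $\mathcal B^{\al-1,\frac12\al-\frac12}_p\to\mathcal B^{\al,\frac12\al}_p$. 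Each reduces to the same Littlewood--Paley device as in the forward step: for a Fourier multiplier $m$ of parabolic order $s$ one checks $\|\mathcal F^{-1}(m\widehat{\tilde\phi_i})\|_{L^1}\lesssim 2^{si}$, and Young's inequality then yields $\|\phi_i*\mathcal F^{-1}(m\hat f)\|_{L^p}\lesssim 2^{si}\|\phi_i*f\|_{L^p}$. Applying this with $s=0$ to $R_k$ and $K$, and $s=-1$ to $H_1$, and chaining the estimates gives the converse half of \eqref{iteratedb}.

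The principal technical obstacle is the Bernstein-type estimate $\|D^{1/2}_t\tilde\phi_i\|_{L^1}\lesssim 2^i$, together with the analogous $L^1$-kernel bounds for the symbols $\widehat{H_1}$ and $-i\xi_k/|\xi|$; none of these multipliers are smooth across $\tau=0$ or $\xi=0$ respectively, so the classical Mikhlin criterion does not apply directly and indeed the dyadic parabolic annulus $\mathrm{supp}(\widehat{\tilde\phi_i})$ intersects both of those singular loci. The resolution is to exploit that $\widehat{\tilde\phi_i}$ lives in an annulus bounded away from the parabolic origin and to separate the singular part of each symbol from its smooth part after rescaling to the unit annulus; this is the heart of the parabolic Besov-multiplier machinery needed here, and once these anisotropic $L^1$-estimates are in place the remainder of the proof is a mechanical chain of operator bounds mirroring the $\mathcal L^p$-case of Theorem \ref{iterates}.
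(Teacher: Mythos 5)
Your forward direction is essentially the paper's own argument: the paper uses the reproducing family $\phi_{i-1}+\phi_i+\phi_{i+1}$ in place of your $\tilde\phi_i$, moves $D_{X_k}$ and $D_t^{1/2}$ onto it, and closes with Young's inequality and the bounds $\|D_{X_k}\phi_i\|_{L^1}\lesssim 2^i$, $\|D_t^{1/2}\phi_i\|_{L^1}\lesssim 2^i$ (the latter proved not by ``spectral localization'' but by the explicit singular-integral formula \eqref{half} and a change of variables, which is the honest way around the non-smoothness of $|\tau|^{1/2}$ at $\tau=0$; you should do the same). Also note the paper dispatches $1<p<\infty$ in one line by interpolating Theorem \ref{iterates} via (2) of Proposition \ref{prop2}, so only $p=1,\infty$ need the block-level argument.

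The converse, however, has a genuine gap. Your parametrix forces the operator $\sum_k R_kD_{X_k}$ with symbol $2\pi|\xi|$, hence requires the spatial Riesz transforms to act boundedly on $\mathcal B^{\al-1,\frac12\al-\frac12}_p({\bf R}^{n+1})$ at $p=1,\infty$, and your proposed mechanism for this --- the bound $\|\mathcal F^{-1}\bigl(\tfrac{\xi_k}{|\xi|}\widehat{\tilde\phi_i}\bigr)\|_{L^1}\lesssim 1$ --- is false. The parabolic annulus $\mathrm{supp}\,\widehat{\tilde\phi_i}$ meets the hyperplane $\{\xi=0\}$ (at points with $|\tau|^{1/2}\sim 2^i$), so $\int_{\R}\tilde\phi_i(Y,t)\,dY=\bigl(\widehat{\tilde\phi_i}(0,\cdot)\bigr)^{\vee}(t)\not\equiv 0$, and consequently $R_k\tilde\phi_i(X,t)\sim c(t)X_k|X|^{-n-1}$ for large $|X|$, which is not integrable in $X$; no amount of ``separating the singular part after rescaling'' recovers a uniform $L^1$ bound (a further isotropic dyadic decomposition in $\xi$ alone produces $O(i)$ pieces each of size $O(1)$, i.e.\ at best a logarithmic loss). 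The same defect afflicts your multiplier $\widehat K$ through the factor $1+|\xi|+|\tau|^{1/2}$. The paper's converse is built precisely to avoid this: it divides by the parabolic-order-$2$ heat symbol $-4\pi^2|\xi|^2+i\tau$ rather than an order-$1$ symbol, writing $-4\pi^2|\xi|^2=\sum_k(2\pi i\xi_k)^2$ and $i\tau=\bigl(i\,\mathrm{sign}(\tau)|\tau|^{1/2}\bigr)\cdot|\tau|^{1/2}$, so that one clean factor lands on $f$ (producing exactly $D_{X_k}f$ or $D_t^{1/2}f$) and the other lands on the auxiliary Schwartz kernel $\Phi_i$ with $\hat\Phi=\phi/(-4\pi^2|\xi|^2+i\tau)$ as an honest derivative $D_{X_k}\Phi_i$ or as $HD_t^{1/2}\Phi_i$, both of which are shown to lie in $L^1$ with norm $\lesssim 2^i$ (the latter again via the explicit kernel). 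This yields $\|f*\phi_i\|_{L^1}\lesssim 2^{-i}\bigl(\sum_k\|D_{X_k}f*\phi_i\|_{L^1}+\|D_t^{1/2}f*\phi_i\|_{L^1}\bigr)$ with no Riesz transforms anywhere. You should replace your converse with this order-$2$ division, or else supply an actual proof of endpoint Riesz-transform boundedness on parabolic Besov spaces, which I do not believe is available.
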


\begin{proof} If $1 < p <\infty$, then by Theorem \ref{iterates} and the property of
interpolation spaces (see (2) of Proposition \ref{prop2}),  \eqref{iteratedb} holds.
Hence we have only to consider the critical case $p=1$ and
$p=\infty$. Since the proofs are exactly same, we only prove in the case of $p =1$.

Suppose that  $ f \in {\mathcal B}^{\al, \frac12 \al }_1({\bf
R}^{n+1})$. Then by the definition of the parabolic Besov space,
we have
\begin{eqnarray*}
\|f\|_{{\mathcal B}^{\al, \frac12 \al}_1} = \|f * \psi\|_{L^1} + \sum_{1 \leq i <
\infty} 2^{\al i} \|f * \phi_i\|_{L^1} < \infty.
\end{eqnarray*}
Note that by construction of  $ \psi$ and $\phi_i$ in section \ref{sec1},  we have
$\hat \psi + \hat \phi_1 + \hat \phi_2=1$ in $supp \, (\hat \psi + \hat \phi_1)$ and
$\hat \phi_{i-1} + \hat \phi_i + \hat \phi_{i+1}=1$ in $supp \,  \hat \phi_i$ for $i \geq 2$.
Hence, using $D_{X_k} (f*g) = (D_{X_k} f)* g = f * ( D_{X_k} g)$,  we have
\begin{align*}
(D_{X_k} f )* \psi  &= f * \psi  * D_{X_k}( \psi +
\phi_1 + \phi_2),\\
 (D_{X_k} f) * \phi_1 & = f * \phi_1 * D_{X_k}( \psi +
 \phi_1 + \phi_2),\\
 (D_{X_k} f )* \phi_i & = f * \phi_i
* D_{X_k}( \phi_{i-1} + \phi_i + \phi_{i+1}),\quad i \geq 2.
\end{align*}
Note that  $\|D_{X_k} \psi\|_{L^1} \lesssim$ and $ \|D_{X_k} \phi_i
\|_{L^1} \lesssim2^i$.     Hence, by Young's inequality, we have
\begin{align*}
\| (D_{X_k} f )* \psi   \|_{L^1}  & \leq  \|  f * \psi \|_{L^1} \|  D_{X_k}( \psi +
\phi_1 + \phi_2) \|_{L^1} \lesssim\|  f * \psi \|_{L^1},\\
\| (D_{X_k} f )* \phi_1   \|_{L^1}  & \leq  \|  f * \phi_1 \|_{L^1} \|  D_{X_k}( \psi +
\phi_1 + \phi_2) \|_{L^1} \lesssim\|  f * \phi_1\|_{L^1},\\
\| (D_{X_k} f )* \phi_i   \|_{L^1} &  \leq  \|  f * \phi_i\|_{L^1}
\|  D_{X_k}( \phi_{i-1} + \phi_{i} + \phi_{i-1}) \|_{L^1} \lesssim
2^i\|  f * \phi_i \|_{L^1}, \quad i \geq 2.
\end{align*}
Hence, we have
\begin{align*}
\|D_{X_k} f\|_{{\mathcal B}^{\al -1,\frac12 \al -\frac12}_1} &= \|D_{X_k} f *
\psi\|_{L^1} + \sum_{1 \leq i < \infty} 2^{(\al-1) i} \|D_{X_k} f * \phi_i\|_{L^1}\\
 &  \lesssim \big( \| f * \psi\|_{L^1} +  \sum_{1 \leq i < \infty} 2^{\al i}  \| f *
                \phi_i\|_{L^1} \big)\\
 & =  \|f\|_{{\mathcal B}^{\al-1, \frac12 \al -\frac12}_1}.
\end{align*}
Similarly, we obtain
\begin{align*}
(D^{\frac12}_t f) * \psi  &= f * \psi  * D^\frac12_t( \psi +
\phi_1 + \phi_2),\\
 (D^{\frac12}_t f) * \phi_1 & = f * \phi_1 * D^\frac12_t( \psi +
 \phi_1 + \phi_2),\\
 (D^{\frac12}_t f) * \phi_i & = f * \phi_i
* D^\frac12_t( \phi_{i-1} + \phi_i + \phi_{i+1}), \quad i \geq 2.
\end{align*}
Note that using \eqref{half} and change of variables, we have
\begin{eqnarray}\label{0618}
\begin{array}{ll}
\|D^\frac12_t \phi_i\|_{L^1} &= c\int_{{\bf R}^{n+1}}| \int_{{\bf
R}} \frac{\phi_i(X,t) - \phi_i (X,s)}{|t-s|^{\frac32}}ds|dXdt\\
  & \lesssim2^i \int_{{\bf R}^{n+1}} \int_{{\bf R}}
     \frac{|\phi (X,t) - \phi (X,s)|}{|t-s|^{\frac32}}dsdXdt \\
& \lesssim2^i\|\phi\|_{{\mathcal B}^{1,\frac12}_1 ({\bf R}^{n+1})}\\
\|D^{\frac12}_t \psi \|_{L^1} &\lesssim\| \psi\|_{{\mathcal
B}^{1,\frac12}_1 ({\bf R}^{n+1})}.
\end{array}
\end{eqnarray}
As the same reason to the case of $D_{X_k} f$, using Young's
inequality, we have $\|D^\frac12_t f\|_{{\mathcal B}^{\al-1,\frac12
\al -\frac12}_1} \lesssim\| f\|_{{\mathcal B}^{\al-1, \frac12 \al
-\frac12}_1}$. Hence, we proved one side of \eqref{iteratedb}.

Conversely, we suppose that $\|f\|_{{\mathcal B}^{\al -1,\frac12 \al -\frac12}_1} ,
\|D_{X_k}f\|_{{\mathcal B}^{\al -1,\frac12 \al -\frac12}_1},\|D^\frac12_t
f\|_{{\mathcal B}^{\al -1,\frac12 \al -\frac12}_1} < \infty.$  Since $\phi$ is
supported in $\{(\xi,\tau) \in {\bf R}^{n+1} \, | \, 2^{-1} <
|\xi| + |\tau|^\frac12 < 2 \}$, we have that $\frac{1}{(- 4\pi^2|\xi|^2
+i\tau)} \phi (\xi, \tau) \in {\mathcal S}({\bf R}^{n+1})$. We
define $\Phi$ and $\Phi_i$ by the functions whose Fourier
transforms are written by $ \hat \Phi(\xi, \tau)=\frac{1}{ -4\pi^2|\xi|^2
+i\tau} \phi (\xi, \tau) $ and $\hat \Phi_i(\xi, \tau) = \hat \Phi
(2^{-i}\xi, 2^{-2i} \tau)$. Then, for $i \geq 2$, we have
\begin{align}\label{0628-2}
\begin{array}{ll} \vspace{2mm}
\widehat{ f* \phi_i}  & =\hat f  \hat \phi_i
                  ( \hat \phi_{i-1} +\hat \phi_i + \hat \phi_{i+1})\\ \vspace{2mm}
  &=  \hat f  \hat \phi_i
               \frac{-4\pi^2|\xi|^2 + i\tau}{ -4\pi^2|\xi|^2 + i \tau} \big(\hat \phi_{i-1} +\hat \phi_i +
               \hat \phi_{i+1} \big) \\ \vspace{2mm}
 &=  2^{-2i} \sum_{1 \leq k \leq n} \widehat {D_{X_k}f}  \hat
 \phi_i   \big( \widehat{D_{X_k}\Phi_{i-1}}
                + \widehat {D_{X_k} \Phi_i}  +\widehat{D_{X_k}\Phi_{i+1}} \big)\\
 &  \quad + 2^{-2i}\widehat{D^\frac12_t f}  \hat \phi_i
                  \big( \widehat{HD^\frac12_t \Phi_{i-1}}
                + \widehat{HD^\frac12_t \Phi_i}  + \widehat{HD^\frac12_t
                \Phi_{i+1}} \big),
                \end{array}
\end{align}
where $H$ is the Hilbert transform. We used the fact that $D_t \Phi
= H D^{\frac12}_t D^{\frac12}_t \Phi$. Note that
$\|D_{X_k}\Phi_i\|_{L^1} \lesssim2^i$. Moreover,
\begin{align*}
H D^\frac12_t \Phi_i(X,t) &= \lim_{\ep \ri 0} \int_{\ep< |t-s| <
\frac{1}{\ep}} \frac{sign(t-s)}{|t-s|^{\frac32}}\Phi_i(X,s)ds\\
&= \lim_{\ep \ri 0} \int_{\ep< |t-s| < \frac{1}{\ep}}
\frac{sign(t-s)}{|t-s|^{\frac32}}(\Phi_i(X,s) - \Phi_i (X,t))ds,
\end{align*}
where $sign(t) =1$ if $t >0$ and $sign(t) =-1$ if $t < 0$.
Hence, using change of variables (see \eqref{0618}), we get
\begin{eqnarray*}
\|HD^\frac12_t\Phi_i\|_{L^1}  \lesssim\int_{{\bf R}^{n+1}}
\int_{{\bf R}} \frac{ |\Phi_i(X,s) - \Phi_i
(X,t)|}{|t-s|^{\frac32}}dsdXdt  \lesssim2^i \|\Phi\|_{{\mathcal
B}^{1,\frac12}_1({\bf R}^{n+1})}.
\end{eqnarray*}
Hence, applying  Young's inequality in \eqref{0628-2}, we have
\begin{align}\label{Sovk}
\| f* \phi_i\|_{L^1} \leq 2^{-i} (\sum_{1 \leq k \leq n}
\|D_{X_k}f
* \phi_i\|_{L^1} + \|D^{\frac12}_t f * \phi_i \|_{L^1}) \quad i \geq 2.
\end{align}





Hence by (\ref{Sovk}),  we have
\begin{align*}
\|f\|_{{\mathcal B}^{\al, \frac12 \al}_1} &= \|f * \psi\|_{L^1} + \sum_{1 \leq i <
\infty} 2^{\al i} \|f * \phi_i\|_{L^1} \\
& \lesssim\Big(  \|f * \psi\|_{L^1} + \| f* \phi_1\|_{L^1} +
\sum_{2 \leq i < \infty} 2^{ (\al -1) i}        \big(
\sum_{1 \leq k \leq n} \|D_{X_k}f
* \phi_i\|_{L^1} + \|D^{\frac12}_t f * \phi_i \|_{L^1}   \big)       \Big)\\
& \lesssim\Big(\|f\|_{{\mathcal B}^{\al -1,\frac12 \al -\frac12}_1}
+ \|D_{X_k}f\|_{{\mathcal B}^{\al -1,\frac12 \al -\frac12}_1} +
\|D^\frac12_tf\|_{{\mathcal B}^{\al -1,\frac12 \al
-\frac12}_1}\Big).
\end{align*}
Hence, we completed the proof of Theorem \ref{iterateb}.
\end{proof}

By (\ref{besovnorm1}),  (\ref{besovnorm2}) and  Theorem
\ref{iterateb}, we get the  following Corollary;
\begin{coro}\label{iterateb2}
\begin{itemize}

\item[(1)]
Let $ 1 \leq  p \leq   \infty$  and $\al \in {\bf R}$ such that $2i
< \al < 2i+2$ for positive integer $l$. Then $f \in {\mathcal
B}^{\al, \frac12 \al}_p ({\bf R}^{n+1})$ if and only if $f, \,
D^\be_{X}  f, D^i_t f  \in {\mathcal B}_p^{\al -2i,\frac12 \al -i}
({\bf R}^{n+1})$ for all $|\be| = 2i$. Furthermore,
\begin{align*}
\|f\|_{ {\mathcal B}^{\al , \frac12 \al}_p} \approx & \|f\|_{{\mathcal B}^{\al
-2i,\frac12 \al -i}_p}  + \sum_{ |\be| =2 i }\|D^\be_{X}  f\|_{{\mathcal
B}^{\al -2i,\frac12 \al -i}_p} + \|D^l_t f \|_{{\mathcal B}^{\al
-2i,\frac12 \al -i}_p}.
\end{align*}
\item[(2)]
In particular,  for $1 \leq p < \infty$,   we have
\begin{align*}
&\|f\|^p_{ {\mathcal B}^{\al, \frac12 \al}_p}
\approx \|f\|^p_{W^{2i, i }_p} +  \int_{\R}\!\! \int_{{\bf R}
\times {\bf R}}
 \frac{|D^i_t  f(X,t) -  D^i_t f(X,s)|^p}{|t-s|^{ 1 + p\frac12(\al -2i )}}dtdsdX \\
& \qquad +  \sum_{ |\be| =2i}  \int_{{\bf R}} \int_{\R \times \R }
\frac{|D^\be_{X} f(X+Y,t) -2 D^\be_{X}  f(X,t) +
D^\be_{X}  f(X-Y,t)|^p}{|Y|^{ n + p(\al -2i )}}dXdYdt
\end{align*}
and
\begin{align*}
 &\|f\|_{{\mathcal B}^{\al, \frac12 \al}_\infty} \approx  \|f\|_{ W^{2 i, i }_\infty}
 +\sup_{X ,t,s, t \neq s}
\frac{|  D^i_tf(X,t) - D^i_tf(X,s)|}{|t-s|^{
\frac12 (\al -2i )}} \\
& \hspace{10mm} +  \sum_{|\be|   =2 i }  \sup_{t,s, X,Y,Y \neq 0 }
\frac{|D^\be_{ X}  f(X+Y,t) -2D^\be_{X}
f(X,t)-D^\be_{X} f(X-Y,t)|}{|Y|^{\al -2 i }}.
\end{align*}
\end{itemize}
\end{coro}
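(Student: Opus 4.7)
Part (1) is obtained by iterating Theorem~\ref{iterateb} exactly $2i$ times. One application of that theorem replaces $\mathcal{B}^{\alpha,\frac12\alpha}_p$ by the three norms on $f$, $D_{X_k}f$, $D^{1/2}_t f$ in $\mathcal{B}^{\alpha-1,\frac12\alpha-\frac12}_p$; iterating yields
\begin{equation*}
\|f\|_{\mathcal{B}^{\alpha,\frac12\alpha}_p}\approx\sum_{|\gamma|+j\leq 2i}\|D^\gamma_X D^{j/2}_t f\|_{\mathcal{B}^{\alpha-2i,\frac12\alpha-i}_p},
\end{equation*}
where the right-hand side collects every mixed parabolic derivative of total order at most $2i$. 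To reduce this full collection to the three terms displayed in the corollary, I run the same Fourier-multiplier argument used in Corollary~\ref{iterates2}: for each $(\gamma,j)$ with $|\gamma|+j\leq 2i$, the symbol
\begin{equation*}
\mu_{\gamma,j}(\xi,\tau)=\frac{(2\pi i\xi)^\gamma\,|\tau|^{j/2}}{1+\sum_{|\beta|=2i}(2\pi\xi)^{2\beta}+\tau^{2i}}
\end{equation*}
is bounded and satisfies the Marcinkiewicz hypotheses of Proposition~\ref{Marcinkiwitz}, hence defines an $L^p({\bf R}^{n+1})$-multiplier for $1<p<\infty$. Using the real-interpolation description of the parabolic Besov norm (Proposition~\ref{prop2}(2)), this $L^p$-bound transfers to a $\mathcal{B}^{\alpha-2i,\frac12\alpha-i}_p$-bound; at the endpoints $p=1,\infty$ I rerun the Littlewood--Paley argument used in the proof of Theorem~\ref{iterateb} (cf.~\eqref{0618}), controlling $\mu_{\gamma,j}\ast\phi_k$ directly in $L^1$ by scaling. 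This dominates every mixed term $\|D^\gamma_X D^{j/2}_t f\|_{\mathcal{B}^{\alpha-2i,\frac12\alpha-i}_p}$ by $\|f\|_{\mathcal{B}^{\alpha-2i,\frac12\alpha-i}_p}+\sum_{|\beta|=2i}\|D^\beta_X f\|_{\mathcal{B}^{\alpha-2i,\frac12\alpha-i}_p}+\|D^i_t f\|_{\mathcal{B}^{\alpha-2i,\frac12\alpha-i}_p}$; the fact that the top time derivative appears as $D^i_t$ rather than $(D^{1/2}_t)^{2i}$ is harmless since the two symbols differ only by the bounded Hilbert-type multiplier $(i\,\mathrm{sign}(\tau))^i$. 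The converse inequality is trivial.

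Part (2) is then immediate from (1): since $\alpha-2i\in(0,2)$, each of the three Besov norms on the right of (1) admits the explicit integral (respectively supremum) description of Proposition~\ref{prop2}(3). Summing the three expressions, the $L^p$-pieces collapse into $\|f\|_{W^{2i,i}_p}$ after invoking the same multiplier argument one more time to reconstruct the missing lower-order mixed derivatives, while the second-difference-in-$X$ and first-difference-in-$t$ integrals reproduce exactly the two double integrals (or suprema) displayed in the statement.

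The main obstacle is the endpoint case $p\in\{1,\infty\}$ of the Besov-multiplier transfer, where real interpolation is not available and one must verify the multiplier estimate by a direct $L^1$ estimate on each $\mu_{\gamma,j}\ast\phi_k$, imitating the scaling bookkeeping of \eqref{0618}. A secondary, purely combinatorial point is to check that the sum over mixed derivatives produced by iterating Theorem~\ref{iterateb} really does collapse onto the three-term right-hand side without losing the lower-order $W^{2i,i}_p$-contribution needed for (2).
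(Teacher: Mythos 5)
Your overall strategy --- iterate Theorem \ref{iterateb}, then use a Fourier multiplier localized by the Littlewood--Paley pieces to collapse the resulting family of mixed derivatives onto the three terms of the corollary, and finally invoke Proposition \ref{prop2}(3) for part (2) --- is exactly the route the paper takes: its proof consists of applying Theorem \ref{iterateb} repeatedly for one direction and, for the other, replacing the symbol $\frac{1}{-4\pi^2|\xi|^2+i\tau}\hat\phi(\xi,\tau)$ in \eqref{0628-2} by $\frac{1}{(-4\pi^2|\xi|^2)^i+(i\tau)^i}\hat\phi(\xi,\tau)$ and rerunning the Young's-inequality argument, which is the endpoint-safe version of your multiplier step.

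However, the specific symbol you wrote down does not do the job. The denominator $1+\sum_{|\beta|=2i}(2\pi\xi)^{2\beta}+\tau^{2i}$ is comparable to $1+|\xi|^{4i}+|\tau|^{2i}$, i.e.\ it is the symbol of an operator of parabolic order $4i$, built from spatial derivatives $D_X^{2\beta}$ of order $4i$ and the time derivative $D_t^{2i}$. Writing $D^\gamma_XD^{j/2}_tf=\mathcal{F}^{-1}\bigl(\mu_{\gamma,j}\cdot(\mathrm{denominator})\hat f\bigr)$ therefore bounds each mixed term by $\|f\|+\sum_{|\beta|=2i}\|D^{2\beta}_Xf\|+\|D^{2i}_tf\|$, which involves derivatives of twice the order allowed on the right-hand side of the corollary; the claimed domination by $\|f\|+\sum_{|\beta|=2i}\|D^\beta_Xf\|+\|D^i_tf\|$ does not follow from it. You need a denominator of parabolic order exactly $2i$ that is realized by the corollary's three terms, e.g.\ the symbol of $1+\Delta^i+D^i_t$ (as in the paper's replacement $(-4\pi^2|\xi|^2)^i+(i\tau)^i$, where $\Delta^i$ is a fixed linear combination of the $D^\beta_X$ with $|\beta|=2i$), checked to be nonvanishing on the annular support of $\hat\phi$. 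With that correction the rest of your argument (Marcinkiewicz for $1<p<\infty$, direct $L^1$ estimates on $\mathcal{F}^{-1}(\mu\hat\phi_k)$ at the endpoints in the spirit of \eqref{0618}, and the reduction of (2) to (1) via Proposition \ref{prop2}(3)) goes through. Note also that your aside that $D^i_t$ and $(D^{1/2}_t)^{2i}$ ``differ only by a bounded Hilbert-type multiplier'' needs the same endpoint care: $\mathrm{sign}(\tau)$ is not an $L^1$- or $L^\infty$-multiplier, so it must stay bundled with the localizing functions, exactly as the paper handles $HD^{1/2}_t\Phi_k$ through its explicit kernel.
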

\begin{proof}
 Applying Theorem \ref{iterateb}  two times, we obtain   one side of (1).
To show that the right side of (2) implies the left side of (3),  we replace $\frac{1}{ - 4\pi^2|\xi|^2
+i\tau} \phi (\xi, \tau) $ by    $\frac{1}{( - 4\pi^2|\xi|^2)^i
+ (i\tau)^i} \phi (\xi, \tau)  $ in   \eqref{0628-2} and apply the proof of Theorem \ref{iterateb}.
 (2) holds because of  (1)  and \eqref{prop2}.
\end{proof}

\section{Parabolic Sobolev and parabolic Besov space in  ${\bf R}^{n}_T$}
\setcounter{equation}{0} \label{sec4}

If $i$ is non-negative integer, we define the parabolic  Sobolev
space $W_p^{2i,i} ({\bf R}^n_T), \,\, 0 < T \leq \infty$ by
\begin{eqnarray*}
W_p^{2i,i} ({\bf R}^n_T) = \{ f \, | \, D^{\be}_{X}D_t^l f \in L^p ({\bf R}^n_T),
\,\, 0 \leq |\be | + 2l \leq 2i \},
\end{eqnarray*}
so that the norm in $W^{2i,i}_p ({\bf R}^n_T)$  is defined by
\begin{align*}
\|f\|_{W^{2i,i}_p({\bf R}^n_T)} &= \Big(\sum_{2l + |\be| \leq  2i}
\int \int_{{\bf R}^n_T} |D^{\be}_{X} D_t^l f(X,t) |^p dXdt\Big)^{\frac1p},
\quad 1 \leq
p < \infty,\\
\|f\|_{W^{2i,i}_\infty({\bf R}^n_T)} & =  \sum_{2l + |\be| \leq  2i}
\sup_{(X,t) \in {\bf R}^n_T } |D^{\be}_{X} D_t^l f(X,t)|, \quad p= \infty
\end{align*}
 Let $ 2i< \al <2i+2$. We would like to define parabolic  Besov space
${\mathcal B}^{\al,\frac12 \al}_p ({\bf R}^n_T).$   We say
that $f \in {\mathcal B}^{\al, \frac12 \al}_p ({\bf R}^n_T)$ if and only if
\begin{align*}
 & \|f\|^p_{W^{2i,i}_p ({\bf R}^n_T)}+\sum_{|\be| + 2l = 2i}\Big[ \int_{\R}\!\!
\int_0^T\int_0^T  \frac{|D_{X}^\beta D^l_t f(X,t)
- D_{X}^\beta D^l_t f(X,s)|^p}{|t-s|^{ 1 + \frac12p (\al
-2i)}}dtdsdX
 \\
 & \int_0^T  \int_{\R \times \R } \frac{|D_{X}^\beta
D^l_tf(X+Y,t) -2 D_{X}^{\beta}D^l_tf(X,t) + D_{X}^\beta D^l_t
f(X-Y,t)|^p}{|Y|^{ n + p(\al -2i)}}dXdYdt\Big] < \infty
\end{align*}
if $1 \leq p < \infty$ and
\begin{align*}
 & \|f\|_{W^{2i,i}_\infty ({\bf R}^n_T)}+\sum_{|\be| + 2l = 2i}\Big[ \sup_{X,t,s, t \neq s}
\frac{|D_{X}^\beta D^l_t f(X,t) - D_{X}^\beta D^l_t
f(X,s)|}{|t-s|^{  \frac12 (\al -2i)}}  \\
&+ \sup_{t,X,Y, Y\neq 0} \frac{|D_{X}^\beta D^l_tf(X+Y,t) -2
D_{X}^{\beta}D^l_tf(X,t) + D_{X}^\beta D^l_t f(X-Y,t)| }{|Y|^{\al
-2i}} \Big] < \infty.
\end{align*}

\begin{prop}\label{prop3}
Let $1 \leq p \leq \infty$. Suppose that there is a bounded linear
operator $ E_{ {\bf R}^n_T }: W^{2i,i}_p ({\bf R}^n_T) \ri {\mathcal
L}_{2i}^p ({\bf R}^{n+1})$ for all
non-negative integer $i$ and $1 \leq p \leq \infty$ such that 
 $E_{ {\bf R}^n_T} f =f$ in ${\bf R}^n_T$. Then
for $0 < \te < 1, \,\, i < l$, we get  $ (W^{2i,i}_p ( {\bf R}^n_T),
W^{2l,l}_p ({\bf R}^n_T))_{p,\te}={\mathcal B}^{\al,\frac12 \al}_p ({\bf R}^n_T)$, where $\al =
(1-\te) 2i + \te 2l$.
\end{prop}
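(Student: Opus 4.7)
The strategy is the standard retraction--coretraction transfer from real interpolation theory. I plan to exhibit $W^{2k,k}_p({\bf R}^n_T)$ as a retract of ${\mathcal L}^p_{2k}({\bf R}^{n+1})$ for every non-negative integer $k$, then apply Proposition \ref{prop2}(2) in the ambient space and match the resulting quotient norm with the intrinsic parabolic Besov norm on ${\bf R}^n_T$ using the integral characterization in Corollary \ref{iterateb2}.

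First, let $Rg := g|_{{\bf R}^n_T}$ denote restriction. For $1 < p < \infty$, the Remark following Corollary \ref{iterates2} identifies ${\mathcal L}^p_{2k}({\bf R}^{n+1})$ with $W^{2k,k}_p({\bf R}^{n+1})$, so $R$ is automatically bounded into $W^{2k,k}_p({\bf R}^n_T)$; at the endpoints $p = 1, \infty$ the same bound follows by writing $f = H_{2k}*g$ with $g \in L^p$ and observing that each derivative $D_X^{\be} D_t^l f$ with $|\be| + 2l \le 2k$ is the convolution of $g$ with a kernel in $L^1$. Combined with the hypothesized extension $E := E_{{\bf R}^n_T}$ satisfying $R\circ E = \mathrm{id}$ on $W^{2k,k}_p({\bf R}^n_T)$, this exhibits $W^{2k,k}_p({\bf R}^n_T)$ as a retract of ${\mathcal L}^p_{2k}({\bf R}^{n+1})$ for every $k$.

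Applying the retract theorem for real interpolation (Theorem 6.4.2 of \cite{BL}) to the pair $(E,R)$ at the levels $k = i$ and $k = l$, together with Proposition \ref{prop2}(2), then yields
\begin{align*}
\bigl(W^{2i,i}_p({\bf R}^n_T),\, W^{2l,l}_p({\bf R}^n_T)\bigr)_{p,\te} \;=\; R\bigl[{\mathcal B}^{\al,\frac12\al}_p({\bf R}^{n+1})\bigr]
\end{align*}
with norm equivalent to the quotient norm $\|f\|_{*} := \inf\bigl\{\|F\|_{{\mathcal B}^{\al,\frac12\al}_p({\bf R}^{n+1})} : F|_{{\bf R}^n_T} = f\bigr\}$. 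The inequality $\|f\|_{{\mathcal B}^{\al,\frac12\al}_p({\bf R}^n_T)} \le \|f\|_{*}$ is immediate from Corollary \ref{iterateb2}(2): the defining integrand of the intrinsic norm on ${\bf R}^n_T$ agrees with that of the ambient norm, but with the $t$ and $s$ integrations restricted from ${\bf R}$ to $(0,T)$, which can only shrink them.

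The main obstacle is the reverse inequality: given $f \in {\mathcal B}^{\al,\frac12\al}_p({\bf R}^n_T)$ one must produce an extension $\tilde F \in {\mathcal B}^{\al,\frac12\al}_p({\bf R}^{n+1})$ with $\|\tilde F\|_{{\mathcal B}^{\al,\frac12\al}_p({\bf R}^{n+1})} \lesssim \|f\|_{{\mathcal B}^{\al,\frac12\al}_p({\bf R}^n_T)}$. I would build $\tilde F$ by a Hestenes-type higher-order reflection of $f$ in the time variable across the faces $t = 0$ and $t = T$, with finitely many reflection coefficients tuned to match $t$-derivatives of order $\le l$ (and hence $\le i$), followed by a smooth $t$-cutoff confining the support to a bounded slab. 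A change of variables on each reflected copy, combined once more with Corollary \ref{iterateb2}(2), then bounds the ambient intrinsic integrals of $\tilde F$ by those of $f$, yielding the required estimate and completing the proof.
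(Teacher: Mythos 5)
Your overall strategy --- exhibit $W^{2k,k}_p({\bf R}^n_T)$ as a retract of a full-space scale, interpolate in the ambient space, then identify the quotient norm with the intrinsic norm by a Hestenes reflection --- is essentially the route the paper takes (it cites Brown--Shen for the full-space identity $(L^p,W^{2l,l}_p({\bf R}^{n+1}))_{\te,p}={\mathcal B}^{2l\te,l\te}_p({\bf R}^{n+1})$ and Jerison--Kenig for the retract transfer). However, there is a genuine gap in your coretraction step at the endpoints $p=1$ and $p=\infty$. You need the restriction $R$ to be bounded from ${\mathcal L}^p_{2k}({\bf R}^{n+1})$ into $W^{2k,k}_p({\bf R}^n_T)$, and you justify this by claiming that each kernel $D_X^{\be}D_t^l H_{2k}$ with $|\be|+2l\le 2k$ lies in $L^1({\bf R}^{n+1})$. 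That is false for the top-order derivatives: the multipliers $(i\tau)^k(1+4\pi^2|\xi|^2+i\tau)^{-k}$ and $\xi^{\be}(1+4\pi^2|\xi|^2+i\tau)^{-k}$ with $|\be|=2k$ are parabolic singular integrals (the analogue of iterated Riesz transforms), bounded on $L^p$ only for $1<p<\infty$. This is exactly the content of the paper's Remark 3.4(2), which records that ${\mathcal L}^p_{2k}({\bf R}^{n+1})$ and $W^{2k,k}_p({\bf R}^{n+1})$ are \emph{different} spaces at $p=1,\infty$; the inclusion that survives at the endpoints is $W^{2k,k}_p\hookrightarrow{\mathcal L}^p_{2k}$ (write $g=(I-\De+\pa_t)^k f$), not the one you need. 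So your retract diagram does not close for $p\in\{1,\infty\}$, which are precisely the cases the proposition is asserting beyond the reflexive range.

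The repair is to take $W^{2l,l}_p({\bf R}^{n+1})$ itself (rather than ${\mathcal L}^p_{2l}({\bf R}^{n+1})$) as the ambient space, for which $R$ is trivially bounded and the hypothesized $E_{{\bf R}^n_T}$ is the coretraction; but then Proposition 2.2(2), stated for the ${\mathcal L}^p_\al$ scale, no longer supplies the full-space interpolation identity at $p=1,\infty$, and one must invoke the identity $(L^p({\bf R}^{n+1}),W^{2l,l}_p({\bf R}^{n+1}))_{\te,p}={\mathcal B}^{2l\te,l\te}_p({\bf R}^{n+1})$ for the integer Sobolev couple directly, as the paper does via the Brown--Shen reference. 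Your final step --- the two-sided comparison of the quotient norm with the intrinsic difference-quotient norm on ${\bf R}^n_T$, with the reflection extension handling the hard direction --- is sound in outline and is actually spelled out more carefully than in the paper, though the cross terms $t>0>s$ in the time-difference integral for the reflected function deserve an explicit estimate.
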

\begin{proof}
Applying Theorem 4.12 and Corollary 4.13 in \cite{BS},  for $i \in
{\bf N}$, $1 \leq p \leq \infty$ and $0 < \te <1$, we obtain that
$(L^p({\bf R}^{n+1}), W^{2i, i}_p({\bf R}^{n+1}))_{\te p} =
{\mathcal B}^{2i \te, i \te}_p({\bf R}^{n+1}) $. Using the
Proposition 2.4 and the Proposition 2.17 in \cite{JK}, we obtain
Proposition \ref{prop3}.
\end{proof}

To apply the Proposition \ref{prop3},  we define extension operators from $W^{2i,i}_p ({\bf
R}^{n}_\infty)$ to ${\mathcal L}^p_{2i} ({\bf R}^{n+1})$ and from
$W^{2i,i}_p ({\bf R}^{n}_T)$ to ${\mathcal L}^p_{2i} ({\bf
R}^{n+1})$.
For $f \in W^{2i,i}_p ({\bf R}^{n}_\infty)$ we define extension $E_2
f$ of $f$ by
\begin{align}\label{extension}
E_2 f(X,t) = \left\{\begin{array}{l} f(X,t), \quad t\geq 0,\\
 \sum_{1 \leq j\leq 2i+1} \la_j f (X, -jt) \quad t \leq 0,
 \end{array}
 \right.
\end{align}
where the coefficients $\la_1, \cdots , \la_{2i+1}$ are the unique
solution of the $(2i+1) \times (2i+1)$ system of linear equations
$$
\sum_{1 \leq j \leq 2i+1} (-j)^l \la_j =1, \quad l = 0, 1, \cdots
,2 i.
$$
Then $E_2 f \in W^{2i,i}_p ({\bf R}^{n+1})$ with $ E_2f|_{{\bf
R}^{n}_\infty} = f, \,\, \|E_2 f\|_{W^{2i,i}_p ({\bf R}^{n+1})} \leq
c \| f\|_{W^{2i,i}_p ({\bf R}^{n}_\infty) }$  (see Theorem 4.26 in
\cite{A}).

We apply (\ref{extension}) to define the extension operator in
$W^{2i,i}_p (\R_T )$. Let $g \in W^{2i,i}_p (\R_T )$.
We define an extension $E_3$ by
\begin{align*}
E_3 g(X,t) =\te (t) \left \{\begin{array}{ll} \sum_{1 \leq j \leq
2i+1}
\la_j g (X,-jt) & \quad -T < t <0,\\
g(X,t)  & \quad 0 < t <T,\\
\sum_{1 \leq j \leq 2i+1} \la_j g (X,- j (2T-t)) & \quad T < t
< 2T.
\end{array}
 \right.
\end{align*}
and $E_3 g (X,t) =0$ otherwise, where $\te \in C^\infty_c ({\bf R})$
such that $\te \equiv 1 $ in $(0, T)$ and $supp \, \te \subset (-T,
2T)$. Then   $E_3 g|_{{\bf R}^{n}_T} = g$ and $\|E_3g \|_{W^{2i,i}_p
({\bf R}^{n+1})} \lesssim\| g\|_{W^{2i,i}_p (\R_T  )}$.

By Proposition \ref{prop3}, we have the  following theorem.
\begin{theo}\label{RBesov}
Then,
for $0 < \al $ and $1 \leq p \leq \infty$, ${\mathcal B}^{\al,
\frac12 \al}_p
(\R_T)$ is real interpolation space, that is, $(L^p(\R_T),
W_p^{2i,i} (\R_T))_{p,\te} = {\mathcal B}^{2(1-\te) i, (1 -\te) i}_p (\R_T)$, $0 < T \leq \infty$.
\end{theo}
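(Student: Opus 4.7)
The plan is to deduce Theorem \ref{RBesov} directly from Proposition \ref{prop3}. Given $\al > 0$ and $1 \le p \le \infty$, pick an integer $i \ge 1$ with $2i > \al$ and set $\te = \al/(2i) \in (0,1)$. Since $W^{0,0}_p(\R_T) = L^p(\R_T)$, Proposition \ref{prop3} applied with lower index $0$ and upper index $i$ (and with the parameter $\te$) would yield
\begin{align*}
(L^p(\R_T), W^{2i,i}_p(\R_T))_{p,\te} = \mathcal{B}^{\al,\al/2}_p(\R_T),
\end{align*}
which is exactly the statement of Theorem \ref{RBesov} once one matches the parametrization $\al = 2\te i$.

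What remains is to supply the hypothesis of Proposition \ref{prop3}: for every non-negative integer $i$ and every $1 \le p \le \infty$, a bounded linear extension $E : W^{2i,i}_p(\R_T) \to \mathcal{L}^p_{2i}({\bf R}^{n+1})$ right-inverse to restriction. For $i = 0$ the extension by zero works trivially, sending $L^p(\R_T)$ into $L^p({\bf R}^{n+1}) = \mathcal{L}^p_0({\bf R}^{n+1})$. For $i \ge 1$ I would use the operator $E_2$ (when $T = \infty$) or $E_3$ (when $T < \infty$) defined in the paragraph just above the theorem. The bound $\|E_k f\|_{W^{2i,i}_p({\bf R}^{n+1})} \lesssim \|f\|_{W^{2i,i}_p(\R_T)}$ is recorded there and rests on the $(2i+1) \times (2i+1)$ linear system $\sum_j (-j)^l \la_j = 1$ for $l = 0, 1, \dots, 2i$, which guarantees that all mixed derivatives of parabolic order at most $(2i, i)$ glue continuously across $t = 0$ (and, for $E_3$, across $t = T$). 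For $1 < p < \infty$, the Remark after Corollary \ref{iterates2} identifies $W^{2i,i}_p({\bf R}^{n+1}) = \mathcal{L}^p_{2i}({\bf R}^{n+1})$ with equivalent norms, so the $W^{2i,i}_p$ estimate immediately yields the required $\mathcal{L}^p_{2i}$ bound.

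The main obstacle is the endpoint cases $p \in \{1, \infty\}$, where the same Remark warns that $\mathcal{L}^p_{2i} \neq W^{2i,i}_p$. My strategy here is to revisit the proof of Proposition \ref{prop3}: its substantive input is the full-space interpolation identity $(L^p({\bf R}^{n+1}), W^{2i,i}_p({\bf R}^{n+1}))_{\te, p} = \mathcal{B}^{2\te i, \te i}_p({\bf R}^{n+1})$, valid for all $1 \le p \le \infty$ (Theorem 4.12 and Corollary 4.13 in \cite{BS}), combined with the retraction/coretraction machinery of \cite{JK} (Propositions 2.4 and 2.17). That machinery uses only the $W^{2i,i}_p$ boundedness of the extension, not the $\mathcal{L}^p_{2i}$ boundedness. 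I would therefore replace $\mathcal{L}^p_{2i}({\bf R}^{n+1})$ by $W^{2i,i}_p({\bf R}^{n+1})$ on the image side of the extension hypothesis in Proposition \ref{prop3}---its proof is unchanged---and then $E_2$ and $E_3$ verify this weakened hypothesis uniformly in $1 \le p \le \infty$, completing the proof of Theorem \ref{RBesov}.
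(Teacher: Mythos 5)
Your argument is correct and follows the paper's own route: the paper likewise obtains Theorem \ref{RBesov} by feeding the extension operators $E_2$ (for $T=\infty$) and $E_3$ (for $T<\infty$) into Proposition \ref{prop3}, whose proof rests on the full-space identity $(L^p({\bf R}^{n+1}), W^{2i,i}_p({\bf R}^{n+1}))_{\te,p}={\mathcal B}^{2\te i,\te i}_p({\bf R}^{n+1})$ together with the retract machinery of \cite{JK}. Your remark that for $p\in\{1,\infty\}$ the hypothesis of Proposition \ref{prop3} must be read with $W^{2i,i}_p({\bf R}^{n+1})$ in place of ${\mathcal L}^p_{2i}({\bf R}^{n+1})$ is a genuine and needed repair of an imprecision the paper glosses over, since $E_2$ and $E_3$ are only shown to be bounded into $W^{2i,i}_p({\bf R}^{n+1})$ and the two target spaces differ at those endpoints.
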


\begin{theo}\label{iterates3}
Then,
for $\al \geq 2$, and $1 \leq  p \leq \infty$, $f \in {\mathcal
B}^{\al,\frac12 \al}_p (\R_T)$ if and only if $ f, \,D_{X_k} f, \,  D_{X_k} D_{ X_j }
f, \, D_t f \in {\mathcal B}^{\al -2,\frac12 \al -1}_p (\R_T)$, $0 < T \leq \infty$.
\end{theo}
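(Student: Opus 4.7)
The plan is to reduce the theorem on $\R_T$ to Corollary \ref{iterateb2}(1) on the whole space ${\bf R}^{n+1}$ by means of the extension operator $E_3$ introduced just before Theorem \ref{RBesov}. Since $E_3 : L^p(\R_T) \to L^p({\bf R}^{n+1})$ and $E_3 : W^{2l,l}_p(\R_T) \to W^{2l,l}_p({\bf R}^{n+1})$ are bounded for every non-negative integer $l$, Theorem \ref{RBesov} together with Proposition \ref{prop2}(2) and the fact that real interpolation preserves boundedness of linear operators yields a bounded extension
\begin{align*}
E_3 : {\mathcal B}^{\be,\frac12\be}_p(\R_T) \longrightarrow {\mathcal B}^{\be,\frac12\be}_p({\bf R}^{n+1})
\end{align*}
for every $\be > 0$. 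I would use this at both levels $\be = \al$ and $\be = \al - 2 \geq 0$.

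For the forward direction, given $f \in {\mathcal B}^{\al, \frac12 \al}_p(\R_T)$, I would set $\ti f = E_3 f \in {\mathcal B}^{\al, \frac12 \al}_p({\bf R}^{n+1})$ and apply Corollary \ref{iterateb2}(1) with $i = 1$ (iterating when $\al \geq 4$), which produces $\ti f, D_{X_k}\ti f, D_{X_k}D_{X_j}\ti f, D_t \ti f \in {\mathcal B}^{\al-2, \frac12\al-1}_p({\bf R}^{n+1})$. Restricting to $\R_T$ via the intrinsic difference-quotient characterization of the parabolic Besov norm then shows $f, D_{X_k}f, D_{X_k}D_{X_j}f, D_t f \in {\mathcal B}^{\al-2, \frac12\al-1}_p(\R_T)$, with norm controlled by $\|f\|_{{\mathcal B}^{\al, \frac12 \al}_p(\R_T)}$.

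For the converse I would again work with $\ti f = E_3 f$ and verify that each of $\ti f, D_{X_k}\ti f, D_{X_k}D_{X_j}\ti f, D_t \ti f$ lies in ${\mathcal B}^{\al-2, \frac12\al-1}_p({\bf R}^{n+1})$. Because $E_3$ only reflects in $t$, spatial differentiation commutes with it, so $D_{X_k}\ti f = E_3(D_{X_k}f)$ and $D_{X_k}D_{X_j}\ti f = E_3(D_{X_k}D_{X_j}f)$ belong to ${\mathcal B}^{\al-2, \frac12\al-1}_p({\bf R}^{n+1})$ by the boundedness of $E_3$ at level $\al - 2$. Direct differentiation of the defining formula shows $D_t \ti f$ is a linear combination of reflected copies of $f$ (weighted by $\te'(t)$, which is supported away from $t = 0$ and $t = T$) and reflected copies of $D_t f$, both of which are in ${\mathcal B}^{\al-2, \frac12\al-1}_p(\R_T)$ by assumption; hence $D_t \ti f \in {\mathcal B}^{\al-2, \frac12\al-1}_p({\bf R}^{n+1})$. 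The converse implication in Corollary \ref{iterateb2}(1) then yields $\ti f \in {\mathcal B}^{\al, \frac12 \al}_p({\bf R}^{n+1})$, and restriction to $\R_T$ gives $f \in {\mathcal B}^{\al, \frac12 \al}_p(\R_T)$ with the matching norm bound.

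The main obstacle is to justify that the formula obtained by formally differentiating $E_3 f$ actually coincides with the distributional time derivative of $\ti f$ across the gluing planes $t = 0$ and $t = T$; this is where the coefficients $\la_j$ in (\ref{extension}) play a crucial role, since they are designed so that $\ti f$ and its $t$-derivatives up to order $2i$ match across $t = 0$, preventing singular boundary contributions. One must also treat the borderline $\al = 2$ with some care, since then ${\mathcal B}^{0}_p(\R_T)$ is not covered by the intrinsic difference-quotient definition and has to be interpreted through the real interpolation identity from Theorem \ref{RBesov}.
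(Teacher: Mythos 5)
Your proposal is correct and takes essentially the same route as the paper: extend by reflection in $t$, apply the whole-space iteration result (Corollary \ref{iterateb2}), and restrict back, with boundedness of the extension on the parabolic Besov scale obtained by interpolating its Sobolev-level bounds. The only difference is cosmetic: the paper works on ${\bf R}^{n}_\infty$ with $E_2$ and isolates the time-derivative-compatible reflection as a separate operator $E_4$ (coefficients $(-j)\la_j$, so that $D_t(E_2 f)=E_4(D_t f)$) whose boundedness on the lower-order spaces it records explicitly --- the one step you leave implicit when you assert that the ``reflected copies of $D_t f$'' land in ${\mathcal B}^{\al-2,\frac12\al-1}_p({\bf R}^{n+1})$.
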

\begin{proof} Because of the similarity of the proof, we consider
only the case of ${\bf R}^{n}_\infty$. We define extension operator,
\begin{align*}
E_4 f(X,t) =\left \{ \begin{array}{ll} f(X,t) \quad & t> 0,\\
\sum_{1 \leq j \leq 2i+1} (-j) \la_j  f(X,-jt)\quad  & t < 0.
\end{array}
\right.
\end{align*}
Then,  $E_4 : W^{2l-2,l-1}_p ({\bf R}^{n}_\infty) \ri W^{2l-2,l-1}_p
({\bf R}^{n+1}), \, 0 \leq l \leq i$ is bounded operator and so by
\eqref{RBesov}, we get $E_4 : {\mathcal B}^{\al, \frac12 \al}_p (\R_\infty) \ri
{\mathcal B}^{\al,\frac12 \al}_p ({\bf R}^{n+1}), \,\, \al > 0,\,\, 1 \leq p
\leq \infty$ is bounded operator. Note that
\begin{align}\label{iterates4}
D_{X_k} (E_2 f) = E_2  (D_{X_k} f), \,\,  D_{X_i }D_{X_k} ( E_2 f) =
E_2(D_{X_i}D_{X_k} f) ,\,\, D_t( E_2 f) = E_4 (D_t f).
\end{align}
Let $f \in {\mathcal B}^{\al, \frac12 \al}_p ({\bf R}^{n+1}_+)$. Then $E_2 f \in
{\mathcal B}^{\al, \frac12 \al}_p ({\bf R}^{n+1})$ and by Corollary
\ref{iterates2}, we have
$$
E_2 f, \,\, D_{X_k}( E_2 f), \,\, D_{X_i X_k}( E_2 f), \,\, D_t(
E_2 f) \in {\mathcal B}^{\al -2,\frac12 \al -1}_p ({\bf R}^{n+1}).
$$
Hence by (\ref{iterates4}), we have
\begin{align}\label{down}
 f, \,\, D_{X_k} f, \, D_{X_i}D_{ X_k} f, \,\, D_t f \in {\mathcal B}^{\al -2,\frac12 \al -1}_p ({\bf
R}^{n}_\infty).
\end{align}

Conversely, suppose that  \eqref{down} is true. Then
$$
E_2f, \, E_2 D_{X_k} f, \, E_2 D_{X_i}D_{X_k}f, \, E_4 D_t f \in
{\mathcal B}^{\al -2,\frac12 \al -1}_p ({\bf R}^{n+1}).
$$
By (\ref{iterates4}) and Corollary \ref{iterateb2}, we have $E_2 f
\in {\mathcal B}^{\al, \frac12 \al}_p ({\bf R}^{n+1})$. Hence $E_2 f|_{\R_\infty} = f
\in {\mathcal B}^{\al, \frac12 \al}_p(\Om).$
\end{proof}

\begin{rem}\label{remark}
Let $\al \geq 1$. If $ u \in {\mathcal B}^{\al, \frac12 \al}_p ({\bf
R}^{n}_T)$, $0 < T\leq \infty$. Combining Theorem \ref{RBesov} and Theorem
\ref{iterates3} we obtain the estimate
\begin{align}
\| D_X u \|_{{\mathcal B}^{\al-1, \frac12 \al -\frac12}_p ({\bf
R}^{n}_T)} \lesssim\| u \|_{{\mathcal B}^{\al,\frac12 \al}_p ({\bf
R}^{n}_T)}.
\end{align}
\end{rem}

\section{Proofs of Theorem \ref{mainresult}  }
\setcounter{equation}{0} \label{sec5}

 In this section, we study the relation of usual Besov spaces ${\mathcal B}_p^{\al-\frac2p}(\R)$
and parabolic Besov spaces ${\mathcal B}^{\al,\frac12 \al}_p ({\bf R}^{n}_T)$.

\begin{theo}\label{frac3p}
Let $0 < T < \infty$. Let $ f \in {\mathcal B}_p^{- \frac2p}(\R)$ and
$u$ be defined by (\ref{main4}). Then, for $ 1 \leq p \leq  \infty$, we have
\begin{align}\label{boundary2}
\| u\|_{L^p(\R_T)} \lesssim \|
f\|_{{\mathcal B}^{-\frac2p}_p (\R)}.
\end{align}
\end{theo}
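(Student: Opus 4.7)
The strategy is a Littlewood--Paley decomposition of $f$ on $\R$ combined with the frequency-localized smoothing of the heat kernel. Fix an inhomogeneous dyadic partition $1 = \widehat{\ti\psi}(\xi) + \sum_{j\geq 1}\widehat{\ti\phi}(2^{-j}\xi)$ on $\R$, the $\R$-analogue of (\ref{psi1}), so that
\begin{eqnarray*}
\|f\|_{\mathcal{B}_p^{-\frac2p}(\R)}^p \approx \|\ti\psi * f\|_{L^p(\R)}^p + \sum_{j\geq 1} 2^{-2j}\|\ti\phi_j * f\|_{L^p(\R)}^p.
\end{eqnarray*}
Reducing the duality pairing in (\ref{main4}) to honest convolution --- legitimate once one approximates $f$ by Schwartz functions in the negative Besov space --- one writes $u(\cdot,t) = u_\psi(\cdot,t) + \sum_{j\geq 1} u_j(\cdot,t)$ with $u_j(\cdot,t) = \Ga(\cdot,t) * \ti\phi_j * f$ and $u_\psi(\cdot,t) = \Ga(\cdot,t) * \ti\psi * f$.

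The main dyadic estimate I would prove is the Bernstein-type bound
\begin{eqnarray*}
\|u_j(\cdot,t)\|_{L^p(\R)} \lesssim e^{-c\, 4^j t}\,\|\ti\phi_j * f\|_{L^p(\R)},
\end{eqnarray*}
which comes from spectral localization of $\widehat{\ti\phi_j * f}$ on $|\xi|\sim 2^j$, where the heat multiplier $e^{-4\pi^2|\xi|^2 t}$ is bounded by $e^{-c\,4^j t}$; concretely one checks $\|\Ga(\cdot,t)*\ti\phi_j\|_{L^1(\R)} \lesssim e^{-c\,4^j t}$ by scaling of the convolution kernel and applies Young's inequality. Raising to the $p$-th power and integrating in $t \in (0,T)$ produces precisely the weight appearing in the Besov norm:
\begin{eqnarray*}
\int_0^T \|u_j(\cdot,t)\|_{L^p}^p\,dt \lesssim \|\ti\phi_j * f\|_{L^p}^p \int_0^T e^{-cp\cdot 4^j t}\,dt \lesssim 2^{-2j}\|\ti\phi_j * f\|_{L^p}^p.
\end{eqnarray*}
The low-frequency piece contributes $\int_0^T \|u_\psi(\cdot,t)\|_{L^p}^p\,dt \leq T \|\ti\psi * f\|_{L^p}^p$ since $\|\Ga(\cdot,t)\|_{L^1} = 1$.

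The final step is reassembling the dyadic blocks into $\|u\|_{L^p({\bf R}^n_T)}^p$: a naive Minkowski/triangle bound would yield the $\ell^1$-sum of the dyadic norms, which is strictly larger than the $\ell^p$-sum defining the Besov norm when $p > 1$. Instead I would exploit the Fourier almost-orthogonality of the $u_j$ (each spectrally supported on $|\xi|\sim 2^j$) via the Littlewood--Paley square-function equivalence $\|\sum_j u_j\|_{L^p(\R)} \approx \|(\sum_j |u_j|^2)^{\frac12}\|_{L^p(\R)}$ for $1 < p < \infty$, then compare the $\ell^2$ and $\ell^p$ dyadic sums (using $\ell^p \subset \ell^2$ for $p \leq 2$, and its vector-valued $L^p(\ell^2)$ dual for $p \geq 2$), and conclude by Fubini in $(X,t)$. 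The endpoints $p=1$ (where $\ell^1$ and $\ell^p$ coincide and the triangle inequality suffices) and $p=\infty$ (where (\ref{main4}) falls in the classical-convolution branch and $\|u\|_{L^\infty}$ is controlled directly via $\|\Ga(\cdot,t)\|_{L^1}=1$) must be handled separately.

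The main obstacle is this reassembly step: for $1<p<\infty$ one cannot avoid invoking a Littlewood--Paley / square-function argument, since Minkowski's inequality provides only $\ell^1$ control which is strictly weaker than the $\ell^p$ control needed by the Besov norm. A secondary technical point is verifying that the duality-pairing definition of $u$ in (\ref{main4}) agrees with the convolution representation on the $L^p$ dyadic pieces $\ti\phi_j * f$, so that the Littlewood--Paley manipulations above apply rigorously.
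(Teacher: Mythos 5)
Your decomposition and per-block estimate $\|u_j(\cdot,t)\|_{L^p}\lesssim e^{-c4^jt}\|\ti\phi_j*f\|_{L^p}$, together with $\int_0^Te^{-cp4^jt}\,dt\lesssim 2^{-2j}$, reproduce exactly the first half of the paper's argument (its Lemma \ref{multiplier} proves precisely the kernel bound you assert, via a Bernstein-type estimate from Bergh--L\"ofstr\"om). The genuine gap is in the reassembly. Your diagnosis that plain Minkowski only yields an $\ell^1$ sum over $j$ is correct, but your proposed cure --- the square-function equivalence followed by a comparison of $\ell^2$ and $\ell^p$ --- does not close the argument for $2<p<\infty$: there the pointwise embedding goes the wrong way ($\|\cdot\|_{\ell^p}\leq\|\cdot\|_{\ell^2}$), and the correct substitute, Minkowski's inequality $L^p(\ell^2)\hookrightarrow\ell^2(L^p)$, only leaves you with $\int_0^T\big(\sum_je^{-c4^jt}b_j^2\big)^{p/2}dt$, $b_j=\|\ti\phi_j*f\|_{L^p}$, which is the same $\ell^1$-versus-$\ell^{p/2}$ mismatch you started from. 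Your assertion that a Littlewood--Paley argument is unavoidable is also not true.

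The missing idea is that the $t$-dependent factor $e^{-c4^jt}$ by itself converts the $\ell^1$ sum into an $\ell^p$ sum, via H\"older's inequality in the dyadic index with a small geometric weight; no square function is needed and all $1\leq p<\infty$ are handled at once. This is what the paper does: split the sum over $j$ according to $t4^j\leq1$ or $t4^j\geq1$. In the first regime choose $-\frac2p<a<0$ and estimate $\big(\sum_{t4^j\leq1}b_j\big)^p\leq\big(\sum_{t4^j\leq1}2^{-\frac{p}{p-1}aj}\big)^{p-1}\sum_{t4^j\leq1}2^{paj}b_j^p\lesssim t^{\frac{p}{2}a}\sum_{t4^j\leq1}2^{paj}b_j^p$; after swapping sum and integral, $\int_0^{4^{-j}}t^{pa/2}\,dt\approx 2^{-2j}2^{-paj}$ restores exactly the weight $2^{-2j}$ of the ${\mathcal B}_p^{-2/p}(\R)$ norm. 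The regime $t4^j\geq1$ is handled identically using the superexponential decay of the multiplier norm $M(t,j)$. You should replace your square-function step by this weighted H\"older argument. A further point: your treatment of $p=\infty$ via $\|\Ga(\cdot,t)\|_{L^1}=1$ presumes $f\in L^\infty(\R)$, whereas the hypothesis is only $f\in{\mathcal B}^{0}_\infty(\R)$, which is strictly larger; that endpoint also requires the dyadic argument rather than the crude convolution bound.
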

(Compare with the section 1.8.1  in  \cite{Tr3}).

We introduce a function $\phi' \in {\mathcal S} ({\bf R}^{n})$, the Schwartz space in $\R$,
such that
\begin{eqnarray*}
\left\{\begin{array}{ll}
\hat{ \phi}'(\xi) > 0,  & \mbox{ on } 2^{-1} <
|\xi| < 2,\\
\hat{\phi}' (\xi) = 0, &\mbox{ elsewhere},
\end{array}
\right. \\
 \sum_{-\infty < i <
\infty} \hat{\phi}'(2^{-i}\xi) =1 ,&   ( \xi \neq 0).
\end{eqnarray*}
We define functions  $\phi'_i, \,\,\psi' \in {\mathcal S}(\R)$ whose Fourier transforms are
written by
\begin{eqnarray}\label{psi}
\begin{array}{ll}
\hat{\phi'_i}(\xi) &= \hat{\phi}'(2^{-i} \xi), \quad i = 0, \pm 1, \pm 2 , \cdots,\\
\hat{\psi'}(\xi) & = 1- \sum_{1 \leq i < \infty} \hat{\phi}' (2^{-i}
\xi).
\end{array}
\end{eqnarray}
As we defined the parabolic Besov space, we define a Besov space in $\R$.
For $\al \in {\bf R}$ we define the Besov space
${\mathcal B}^{\al,\frac12 \al}_{pq} ({\bf R}^{n})$ by
\begin{eqnarray*}
{\mathcal B}^{\al,\frac12 \al}_{pq} ({\bf R}^{n}) = \{ f \in {\mathcal
S}^{'}({\bf R}^{n}) \, | \, \|f\|_{{\mathcal B}^{\al, \frac12 \al}_{pq}} <
\infty \, \}
\end{eqnarray*}
with the norms
\begin{align*}
 \|f\|_{{\mathcal B}^{\al, \frac12 \al}_{pq}} :&  = \| \psi' * f\|_{L^p} + (
\sum_{ 1 \leq i  < \infty} (2^{\al i} \|\phi'_i *
f\|_{L^p})^q)^{\frac1q}, \quad 1 \leq q < \infty,\\
 \|f\|_{{\mathcal B}^{\al, \frac12 \al}_{p\infty}}  :& =  \sup (\| \psi' * f\|_{L^p} , \,\,  2^{\al i} \|\phi'_i *
f\|_{L^p}),
\end{align*} where $*$ is a convolution in ${\bf
R}^{n}$. When $p=q$, we simply denote ${\mathcal B}^{\al,\frac12 \al}_{pp} $
by ${\mathcal B}^{\al,\frac12 \al}_p$.

\begin{lemm}\label{multiplier}
Let $\hat{\Psi}'(\xi) = \hat{\psi}'(\xi) + \hat{\phi}'( 2^{-1}\xi)  + \hat{\phi}'( 2^{-2}\xi)$ and
$\hat{\Phi}' (\xi) = \hat{\phi}'(2^{-1} \xi) + \hat{\phi}' (\xi) + \hat{\phi}' (2\xi)$.
Let $\hat{\Phi}^{'}_i (\xi) = \hat{\Phi}^{'}(2^{-i} \xi), \,\, i \geq 2$ and let $\rho_{ti}(\xi) =
\hat{\Phi}^{'}_i ( \xi) e^{-t|\xi|^2}$ for each integer $i\geq 2$. Then,
$\rho_{ti}( \xi)$s' are $L^p({\bf R}^{n})$-multipliers with norms
$M(t,i)$ for $1 \leq p \leq \infty$. Furthermore, for $t
> 0$
\begin{align}\label{multiplier2}
M(t,i) & \lesssim e^{-\frac14 t2^{2i}}\sum_{0 \leq l \leq L} t^l
2^{2il} \lesssim e^{-\frac18 t2^{2i}},
\end{align}
where $L =[\frac{n}2] +1$.
\end{lemm}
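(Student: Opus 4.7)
The plan is to control the $L^p$-multiplier norm of $\rho_{ti}$ uniformly in $p\in[1,\infty]$ by the $L^1$-norm of $K_{ti}:=\mathcal{F}^{-1}\rho_{ti}$; indeed, since $\mathcal{F}^{-1}(\rho_{ti}\hat f)=K_{ti}*f$, Young's inequality gives $M(t,i)\leq \|K_{ti}\|_{L^1}$. A dilation strips the scale: set $\eta=2^{-i}\xi$, $s=t\,2^{2i}$, and $\tilde\rho_s(\eta):=\hat\Phi'(\eta)\,e^{-s|\eta|^2}$, so that $\rho_{ti}(\xi)=\tilde\rho_s(2^{-i}\xi)$. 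Correspondingly $K_{ti}(x)=2^{in}\tilde K_s(2^i x)$ with $\tilde K_s=\mathcal{F}^{-1}\tilde\rho_s$, hence $\|K_{ti}\|_{L^1}=\|\tilde K_s\|_{L^1}$. Since $t^l 2^{2il}=s^l$, it suffices to prove
\begin{equation*}
\|\tilde K_s\|_{L^1({\bf R}^n)}\;\lesssim\;e^{-cs}\sum_{l=0}^{L}s^l
\end{equation*}
for some absolute $c>0$.

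For this I would use the weighted Cauchy--Schwarz / Plancherel bound
\begin{equation*}
\|\tilde K_s\|_{L^1}\leq\Bigl(\int_{{\bf R}^n}(1+|x|^2)^{-L}\,dx\Bigr)^{1/2}\Bigl(\int_{{\bf R}^n}(1+|x|^2)^{L}|\tilde K_s(x)|^2\,dx\Bigr)^{1/2},
\end{equation*}
whose first factor is finite because $L=[\frac n2]+1>\frac n2$, and whose second factor is controlled by $C\sum_{|\alpha|\leq L}\|\partial_\eta^\alpha\tilde\rho_s\|_{L^2}$ via Plancherel (multiplication by $x_j$ on the spatial side corresponds to $\partial_{\eta_j}$ on the frequency side). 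Expanding by Leibniz and using the explicit formula
\begin{equation*}
\partial^\gamma e^{-s|\eta|^2}=\Bigl(\sum_{l=\lceil|\gamma|/2\rceil}^{|\gamma|}s^l\,Q_{\gamma,l}(\eta)\Bigr)e^{-s|\eta|^2},
\end{equation*}
where the $Q_{\gamma,l}$ are polynomials bounded on the compact support of $\hat\Phi'$, one obtains $|\partial_\eta^\alpha\tilde\rho_s(\eta)|\lesssim\mathbf{1}_{\mathrm{supp}\,\hat\Phi'}(\eta)\bigl(\sum_{l=0}^{|\alpha|}s^l\bigr)e^{-s|\eta|^2}$. Since $|\eta|^2\geq c>0$ on $\mathrm{supp}\,\hat\Phi'$, integrating and summing over $|\alpha|\leq L$ gives the displayed bound, from which the first inequality of \eqref{multiplier2} follows. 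The second inequality $\lesssim e^{-cs/2}$ is immediate by absorbing the polynomial $\sum_{l=0}^{L}s^l$ into half of the exponential factor.

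The only non-routine point is the combinatorial formula for $\partial^\gamma e^{-s|\eta|^2}$, which is a short induction on $|\gamma|$ (each differentiation either multiplies by $-2s\eta_j$ or differentiates a previously produced monomial). All other ingredients --- dilation, weighted Cauchy--Schwarz and Plancherel, and Young's inequality --- are standard, so I expect no serious obstacle; the precise value of the constant $\frac14$ versus a smaller absolute $c>0$ is merely a matter of how tightly one tracks the support of $\hat\Phi'$.
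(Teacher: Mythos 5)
Your proposal is correct and follows essentially the same route as the paper: rescale by $2^{-i}$ so that only $s=t2^{2i}$ remains, bound the $L^p$-multiplier norm uniformly in $p$ by the $L^1$-norm of the kernel, and control that via derivative estimates of $\hat\Phi'(\eta)e^{-s|\eta|^2}$ on the compact annular support. The only difference is that the paper invokes Lemma 6.1.5 of Bergh--L\"ofstr\"om for the step $\|\mathcal{F}^{-1}\tilde\rho_s\|_{L^1}\lesssim \|\tilde\rho_s\|_{L^2}^{1-\theta}\sup_{|\beta|=L}\|D^\beta\tilde\rho_s\|_{L^2}^{\theta}$, whereas you re-derive that Bernstein-type bound by weighted Cauchy--Schwarz and Plancherel.
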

\begin{proof}
Let $t > 0$.
The $L^p(\R)$-multiplier norms $M(t,i)$ of $\rho_{ti}(\xi) $ are
equal to $L^p(\R)$-multiplier norms of $\rho_{ti}^{'}(\xi)  =
\hat{\Phi}^{'}(\xi) e^{-t2^{2i} |\xi|^2}$ (see Theorem 6.1.3 in
\cite{BL}). To prove our lemma, we make use of the Lemma 6.1.5 in
\cite{BL}. Let $\be = ( \be_1, \cdots , \be_n),$ where $\be_i$ are
non-negative integers. Then, we have
\begin{align*}
|D^\be_{\xi} \rho_{ti}^{'}(\xi)| &\lesssim  e^{-\frac14t 2^{2i}} \sum_{0 \leq l \leq
|\be|} t^l 2^{2il} \chi_{\frac14 < |\xi| < 4}
(\xi),
\end{align*}
where $\chi$ is a characteristic function. Let $L=[\frac{n}2] + 1$ and $\te = \frac{n}{2L}$. Then by Lemma
6.1.5 in \cite{BL}, the $L^p(\R)$-multiplier norms of $\rho^{'}_{ti}$
are dominated by
\begin{align*}
\|\rho'_{ti}\|_{L^2(\R)}^{1 - \te} \sup_{|\be|= L} \|D^\be
\rho^{'}_{ti} \|_{L^2(\R)}^\te
&\lesssim e^{-\frac14t 2^{2i}}
\sum_{0 \leq l \leq L} t^l 2^{2il}.
\end{align*}

This completes the proof. \end{proof}

\begin{proof}[Proof of Theorem \ref{frac3p}]
Since the proof is similar, we only show in the case $1 \leq p <
\infty$. To prove Theorem \ref{frac3p}, we use $\hat{\psi}' (\xi) +
\sum_{1 \leq i < \infty} \hat{\phi}'(2^{-i} \xi) =1$ for all $\xi \in
\R.$ Note that
\begin{align*}
\hat u(\xi,t) = \big(  \hat{\Psi}'(\xi)  \hat{\psi}' (\xi) +  \hat{\Psi}'(\xi)  \hat{\phi}' (2^{-1} \xi)  \big)  e^{-t|\xi
|^2} \hat f +
\sum_{i=2}^\infty \hat{\Phi}'(2^{-i} \xi) \hat{\phi}' (2^{-i} \xi) e^{-t|\xi
|^2} \hat f.
\end{align*}
where $\hat u$ is the Fourier transform in $\R$.
Hence,
we have
\begin{align*}
 \int_0^T \int_{\R}  | u(X,t)|^pdXdt
&  \leq   c_p\int_0^T \int_{\R}  |{\mathcal F}^{-1}
\Big( \big(\hat{\Psi}'(\xi)  \hat{\psi}'(\xi) + \hat{\Psi}' (\xi) \hat{\phi}'_1
(\xi)     \big)            e^{-t|\xi|^2}  \hat{f}                  \Big)|^p dXdt\\
& \quad  +
c_p\int_0^T \int_{\R} |{\mathcal F}^{-1} \Big(\sum_{2 \leq i <
\infty}
\hat{\Phi}'_i(\xi) e^{-t|\xi|^2} \hat{\phi}'_i(\xi) \hat{f} \Big)|^p dXdt.
\end{align*}
 Note that by Young's inequality, we have
\begin{align}\label{norm}
\int_{\R} |\Ga(\cdot, t) *\Psi'| dX \leq  \int_{\R} |\Psi' (X)| dX  < \infty,
\end{align}
Applying Young's inequality again,  the first term is dominated by
\begin{align}\label{negative2}
\int_0^T \big(\| f * \psi' \|^p _{L^p (\R)} + \| f *
\phi'_1 \|^p _{L^p (\R)} \big) dt.
\end{align}
Since $\ \Phi^{'}_i(  \xi) e^{-t |\xi|^2}$ are
$L^p(\R)$-multipliers with norms $M(t,i)$ (see lemma
\ref{multiplier}), we have
\begin{align*}
&  \int_0^T \int_{\R}  |{\mathcal F}^{-1}
\Big(\sum_{ 1 \leq i <
\infty} \psi^{''}_i( \xi) e^{-t|\xi|^2} \phi'_i(\xi) \hat{f} \Big)|^p dXdt\\
& \leq  \int_0^T \Big(\sum_{t2^{2i} \leq 1} M(t,i) \| f * \phi'_i\|_{L^p}
\Big)^pdt\\
&  \quad + \int_0^T  \Big(\sum_{t2^{2i} \geq 1} M(t,i) \| f *
\phi'_i\|_{L^p} \Big)^pdt\\
& = I_1 + I_2.
\end{align*}
By Lemma \ref{multiplier}, for $t 2^{2i} \leq 1$, we have $ M(t,i)
\lesssim.$   Since $\al < \frac2p$, we take $a \in {\bf R}$
satisfying $\al -\frac2p < a < 0$ and using H$\ddot{o}$lder
inequality, we have
\begin{align*}
I_1 & \lesssim \int_0^T \Big(\sum_{t2^{2i} \leq 1 }2^{-\frac{p}{p-1}ai} \Big)^{p-1} \sum_{t2^{2i} \leq 1}
2^{pai} \| f * \phi'_i\|^p_{L^p}dt \\
& \lesssim  \int_0^T t^{\frac12pa }
\sum_{t2^{2i} \leq 1}
2^{pai} \| f * \phi'_i\|^p_{L^p}dt\\
& \lesssim \sum_{1 \leq i < \infty}
2^{pai} \| f * \phi'_i\|^p_{L^p} \int_0^{2^{-2i}}
t^{\frac12pa  }dt\\
& = c \sum_{1\leq  i <  \infty}
2^{-2i } \| f * \phi'_i\|^p_{L^p}.
\end{align*}
Now, we estimate $I_2$. By Lemma \ref{multiplier}, we have that $
M(t,i) \lesssim(t2^{2i})^{-m} \sum_{0 \leq i \leq L} t^i 2^{2ii}
\lesssim2^{(2L-2m)i} t^{L-m} $ for $t 2^{2i} \geq 1$ and $m>0$.  Let
us take $m$ and $b$ satisfying $b
>0$ and $\frac{p}2(2L -2m)
+ \frac12 pb +1 < 0$. Then, we get
\begin{align*}
&I_2   \lesssim \int_0^T   \Big(\sum_{t2^{2i} \geq 1}  2^{(2L-2m)i} t^{L-m} \| f *
\phi'_i\|_{L^p} \Big)^pdt \\
& \lesssim \int_0^\infty t^{\frac{p}2(2L -2m)  }
\Big(\sum_{t2^{2i} \geq 1}2^{-\frac{p}{p-1}
bi} \Big)^{p-1} \sum_{t2^{2i} \geq 1}
2^{pbi}2^{p(2L-2m)i} \| f * \phi'_i\|^p_{L^p}dt \\
& \lesssim \int_0^\infty t^{\frac{p}2( 2L-2m )  +
\frac12pb}
\sum_{t2^{2i} \geq 1}
2^{pbi}2^{p(2L-2m)i} \| f * \phi'_i\|^p_{L^p}dt\\
& \lesssim \sum_{1 \leq i < \infty}
2^{pbi}2^{p(2L-2m)i} \| f * \phi'_i\|^p_{L^p}
\int_{2^{-2i}}^\infty
t^{\frac{p}2(2L -2m )  + \frac12pb}dt\\
& =c\sum_{1 \leq i < \infty}
2^{-2i } \| f * \phi'_i\|^p_{L^p}.
\end{align*}
Hence, we complete the proof of theorem \ref{frac3p}.
\end{proof}

\begin{theo}\label{frac2p}
Let $1 \leq  p \leq  \infty$ and $i$ be a non-negative integer.
Let $ f \in {\mathcal B}^{2i - \frac2p}(\R)$ and $u$ be defined by
(\ref{main4}). Then, for $ T
> 0$, we have
\begin{align}\label{boundary2}
\| u\|_{W^{2i,i}_p({\bf R}^{n}_T)}\lesssim \| f\|_{{\mathcal
B}^{2i-\frac2p}_p (\R)}.
\end{align}
\end{theo}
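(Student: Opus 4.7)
The plan is to reduce Theorem \ref{frac2p} directly to Theorem \ref{frac3p} by commuting derivatives through the heat extension and invoking the elementary Besov embedding $\mathcal{B}_p^{s_1}(\mathbb{R}^n) \hookrightarrow \mathcal{B}_p^{s_0}(\mathbb{R}^n)$ for $s_1 \geq s_0$. For $i=0$ the statement is literally Theorem \ref{frac3p}, so assume $i\geq 1$; then $2i-2/p\geq 0$ for every $p\geq 1$, so $u$ is given by the honest convolution $u(X,t)=\int_{\mathbb{R}^n}\Gamma(X-Y,t)f(Y)dY$ and $u$ satisfies the heat equation $D_tu=\Delta_X u$ pointwise on $\mathbb{R}^n_T$. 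Iterating, $D_t^l u=\Delta_X^l u$ is a linear combination of pure spatial derivatives $D_X^\gamma u$ with $|\gamma|=2l$, so for each $|\beta|+2l\leq 2i$,
\begin{align*}
D_X^\beta D_t^l u=\sum_{|\gamma|=|\beta|+2l}c_\gamma\, D_X^\gamma u.
\end{align*}
Thus it is enough to prove $\|D_X^\gamma u\|_{L^p(\mathbb{R}^n_T)}\lesssim \|f\|_{\mathcal{B}_p^{2i-2/p}(\mathbb{R}^n)}$ for every $|\gamma|\leq 2i$.

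Taking the Fourier transform in $X$ gives $\widehat{u}(\xi,t)=e^{-t|\xi|^2}\widehat{f}(\xi)$, hence
\begin{align*}
\widehat{D_X^\gamma u}(\xi,t)=(2\pi i\xi)^\gamma e^{-t|\xi|^2}\widehat{f}(\xi)=e^{-t|\xi|^2}\widehat{D_X^\gamma f}(\xi),
\end{align*}
so $D_X^\gamma u$ is exactly the heat extension (in the sense of \eqref{main4}) of the datum $D_X^\gamma f$. Since differentiation maps $\mathcal{B}_p^{2i-2/p}(\mathbb{R}^n)$ continuously into $\mathcal{B}_p^{2i-2/p-|\gamma|}(\mathbb{R}^n)$, and since the Littlewood--Paley definition $(\psi',\phi'_i)$ immediately yields $\|g\|_{\mathcal{B}_p^{s_0}}\lesssim\|g\|_{\mathcal{B}_p^{s_1}}$ whenever $s_1\geq s_0$, one gets
\begin{align*}
\|D_X^\gamma f\|_{\mathcal{B}_p^{-2/p}(\mathbb{R}^n)}\lesssim \|D_X^\gamma f\|_{\mathcal{B}_p^{2i-2/p-|\gamma|}(\mathbb{R}^n)}\lesssim \|f\|_{\mathcal{B}_p^{2i-2/p}(\mathbb{R}^n)}.
\end{align*}
Applying Theorem \ref{frac3p} with initial datum $D_X^\gamma f\in\mathcal{B}_p^{-2/p}(\mathbb{R}^n)$ produces
\begin{align*}
\|D_X^\gamma u\|_{L^p(\mathbb{R}^n_T)}\lesssim \|D_X^\gamma f\|_{\mathcal{B}_p^{-2/p}(\mathbb{R}^n)}\lesssim \|f\|_{\mathcal{B}_p^{2i-2/p}(\mathbb{R}^n)},
\end{align*}
and summing over $|\beta|+2l\leq 2i$ yields \eqref{boundary2}.

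The main point requiring care is the identification $D_X^\gamma u=$ heat extension of $D_X^\gamma f$ when $|\gamma|>2i-2/p$, i.e.\ when $D_X^\gamma f$ has strictly negative Besov regularity and its heat extension must be interpreted through the duality pairing $\langle\cdot,\cdot\rangle_\alpha$ from \eqref{main4}. This is handled by approximating $f$ by Schwartz functions (for which the identity is classical) and passing to the limit using that $e^{-t|\xi|^2}$ is a tempered Fourier multiplier and that $\mathcal{S}(\mathbb{R}^n)$ is dense in the Besov spaces in question. All remaining bookkeeping, including the Besov embedding above and the boundedness of $D_X^\gamma:\mathcal{B}_p^{s}\to\mathcal{B}_p^{s-|\gamma|}$, is routine and already implicit in the machinery developed in Section \ref{sec1}.
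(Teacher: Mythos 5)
Your proof is correct and follows essentially the same route as the paper: both convert time derivatives to spatial ones via $D_t^l u = \Delta^l u$, recognize the result as the heat extension of a spatial derivative of $f$, and then apply Theorem \ref{frac3p} together with the mapping property of $D_X^\gamma$ on Besov scales (the paper cites the equivalence $\|f\|_{{\mathcal B}^\al_p} \approx \|f\|_{{\mathcal B}^{\al-1}_p} + \|D_X f\|_{{\mathcal B}^{\al-1}_p}$, which iterated gives exactly your embedding step). Your explicit remark about justifying the commutation $D_X^\gamma u = $ heat extension of $D_X^\gamma f$ at negative regularity via density of Schwartz functions is a point the paper passes over silently, but it is not a different argument.
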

\begin{proof}
From  Theorem \ref{frac3p},   (\ref{boundary2}) holds for $i =0$.

 Let $ i > 0$. We denote $\De= \sum_{1 \leq k\leq n} D^2_{X_k}$ and $ \De^{l+1} = \De \De^l$ for $l \geq 2$.
 Since $D^{l}_t D_X^\be u (X,t)= \De^lD_X^\be u(X,t) =
 < \De^l D_X^\be f, \Ga (X -\cdot,t)>$ for $|\be| + 2l \leq 2 i$, by Theorem \ref{frac3p},  we have
\begin{align*}
\|D^l_tD_X^\be u\|_{ L^p({\bf R}^{n}_T)}  & \lesssim \| \De^l
D_X^\be f\|_{{\mathcal B}_p^{-\frac2p} (\R)}
  \lesssim  \| f \|_{{\mathcal B}_p^{2i-\frac2p} (\R)}.
\end{align*}
For the last inequality, we used the well-known fact
\begin{align}\label{equal}
\|f\|_{{\mathcal B}^\al_p(\R)} \approx  \|f \|_{{\mathcal B}^{\al
-1}_p (\R)} + \| D_X f\|_{{\mathcal B}^{\al -1}_p (\R)}
\end{align}
for each $\al \in {\bf R}$ and $1 \leq p \leq \infty$ (see
\cite{BL}).
This completes the proof of Theorem \ref{frac2p}.
\end{proof}
In fact, for $ i \geq 1, \,\, 1 < p < \infty$ the Theorem \ref{frac2p} is known result
before (see \cite{La}).
\begin{theo}\label{inequality6}
Let $f \in {\mathcal
B}^{ -\frac2p}_p(\R)$ and $u$ be defined by (\ref{main4}).Then, for $ 1 \leq p \leq \infty$,
\begin{align}\label{eequivalent}
\| f\|_{{\mathcal B}^{-\frac2p}_p(\R)} & \lesssim \| u\|_{L^p (\R_T)}.
\end{align}
\end{theo}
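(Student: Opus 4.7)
The strategy is to \emph{invert} the heat semigroup at the frequency-localized level: on the support of $\hat\phi'_i$ (roughly $|\xi|\sim 2^i$) the heat symbol $e^{-t|\xi|^2}$ is bounded below when $t\sim 2^{-2i}$, so we may freely divide by it without losing $L^p$-boundedness. Recall that by construction $\hat\Phi'_i\equiv 1$ on $\mathrm{supp}\,\hat\phi'_i$, so for any $t>0$
\begin{align*}
\widehat{\phi'_i*f}(\xi)=\hat\phi'_i(\xi)\hat f(\xi)=\hat\phi'_i(\xi)\bigl[\hat\Phi'_i(\xi)\,e^{t|\xi|^2}\bigr]\hat u(\xi,t).
\end{align*}
The key ingredient is therefore the dual of Lemma \ref{multiplier}: the symbol $\hat{\Phi}'(\eta)e^{s|\eta|^2}$ is a bounded $L^p(\R)$-multiplier uniformly in $s\in[0,4]$. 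This is immediate from the same argument as in Lemma \ref{multiplier} (Mikhlin-type, via Lemma $6.1.5$ of \cite{BL}): $\hat{\Phi}'$ is supported in an annulus $\{1/4<|\eta|<4\}$ and all its derivatives together with the analytic factor $e^{s|\eta|^2}$ are bounded there. Rescaling $\eta=2^{-i}\xi$ then shows that $\hat\Phi'_i(\xi)e^{t|\xi|^2}$ is an $L^p$-multiplier with norm bounded by an absolute constant whenever $t\,2^{2i}\le 4$.

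Combining this with Young's inequality and the fact that $\|\phi'_i\|_{L^1}=\|\phi'\|_{L^1}$ is independent of $i$, I obtain
\begin{align*}
\|\phi'_i*f\|_{L^p(\R)}\le C\,\|u(\cdot,t)\|_{L^p(\R)}\qquad \text{for every }t\in[2^{-2i-1},2^{-2i}],
\end{align*}
with $C$ independent of $i$. Raising to the $p$-th power and averaging over $t$ in this dyadic time interval yields
\begin{align*}
2^{-2i}\,\|\phi'_i*f\|_{L^p}^{p}\lesssim \int_{2^{-2i-1}}^{2^{-2i}}\|u(\cdot,t)\|_{L^p}^{p}\,dt.
\end{align*}
It is precisely because the Besov weight $2^{-2i}$ (corresponding to $\alpha=-2/p$) matches the length of this time interval that the sum will telescope.

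Choose $i_0\in\mathbb{N}$ so that $2^{-2i_0}\le T$. Summing the above for $i\ge i_0$ gives
\begin{align*}
\sum_{i\ge i_0}2^{-2i}\|\phi'_i*f\|_{L^p}^{p}\lesssim \int_0^{T}\|u(\cdot,t)\|_{L^p}^{p}\,dt=\|u\|_{L^p(\R_T)}^{p}.
\end{align*}
For the finitely many indices $1\le i<i_0$ and for the low-frequency piece $\psi'*f$, I fix any $t_0\in(0,T)$ and observe that $\hat{\psi}'(\xi)e^{t_0|\xi|^2}$ and $\hat{\phi}'_i(\xi)e^{t_0|\xi|^2}$ are smooth symbols supported in a fixed ball, hence bounded $L^p$-multipliers; averaging $t_0$ over an interval such as $[T/2,T]$ converts $\|u(\cdot,t_0)\|_{L^p}^p$ into $\|u\|_{L^p(\R_T)}^p$ (up to a constant depending only on $T$). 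This yields the missing terms in the Besov norm. The $p=\infty$ case follows the same scheme but with the $t$-average replaced by evaluation at any $t\in[2^{-2i-1},2^{-2i}]$ and the sum over $i$ replaced by the supremum.

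The main obstacle is the uniform multiplier bound for $\hat\Phi'_i(\xi)e^{t|\xi|^2}$ with $t\,2^{2i}\lesssim 1$. This is conceptually the exact mirror image of Lemma \ref{multiplier}: the exponential now grows, but on the compact annular support of $\hat{\Phi}'$ it stays bounded with all derivatives uniformly controlled, so the same $L^p$-multiplier argument applies. Once this is in place, the averaging and summation described above are routine.
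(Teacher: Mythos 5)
Your argument is correct and follows essentially the same route as the paper: decompose $f$ with the Littlewood--Paley pieces $\phi'_i$, and on the annulus $|\xi|\sim 2^i$ invert the heat symbol $e^{-t|\xi|^2}$ at times $t\sim 2^{-2i}$ via a compactly supported $L^p(\R)$-multiplier whose norm is uniform in $i$ by dilation invariance, so that $2^{-2i}\|\phi'_i*f\|_{L^p}^p$ is controlled by the $L^p$ norm of $u$ over a dyadic time slab, and the slabs are essentially disjoint. The one technical difference is how the fixed-time estimate becomes a time-integrated one: the paper evaluates at the single time $t=2^{-2i}$ and then invokes the parabolic sub-mean-value property of the caloric function $u$ over the cylinder $J_{2^{-i-1}}(X,2^{-2i})$ to replace $\|u(\cdot,2^{-2i})\|_{L^p}^p$ by its average over $t\in[2^{-2i},2^{-2i+2}]$, whereas you observe that the rescaled symbol $\hat\Phi'(\eta)e^{t2^{2i}|\eta|^2}$ stays uniformly bounded (with all derivatives) on the annulus for all $t2^{2i}\lesssim 1$ and simply average the resulting estimate in $t$. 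Your variant buys a small simplification, since it never uses that $u$ solves the heat equation beyond the formula $\hat u(\xi,t)=e^{-t|\xi|^2}\hat f(\xi)$; the paper's mean-value step is the only place where caloricity of $u$ enters. Your direct treatment of the low-frequency piece and the finitely many indices $i<i_0$ with a fixed $t_0\in[T/2,T]$ (constants depending on $T$ being permitted) also replaces the paper's reduction to $T=1$ followed by parabolic scaling, with the same outcome.
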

\begin{proof}
Since the proof  of the case   $p = \infty$ is similar, we only prove the case $1 \leq p < \infty$.
Note that the
$L^p(\R)$-multiplier norms of $\hat{\phi}' (2^{-i} \xi) e^{|2^{-i}
\xi|^2}$ are equal to the $L^p(\R)$-multiplier norm of
 $\hat{\phi}'(\xi)
e^{|\xi|^2}$, where $\hat{\phi}'$ is defined in \eqref{psi}  (see
Theorem 6.1.3 in \cite{BL}). Using Lemma 6.1.5 in
 \cite{BL}, we have the $L^p(\R)$-multiplier norm of $ \hat{\phi}'(\xi) e^{|\xi|^2}$ is finite.
 Hence, for $1 \leq p < \infty$, we have
\begin{align*}
(2^{  -\frac2p i}\| f* \phi'_i\|_{L^p (\R)})^p &=  2^{  -2 i } \int_{\R} | {\mathcal F}^{-1} ( \hat{\phi}'(2^{-i}\xi)
e^{2^{-2i}|\xi|^2} e^{-2^{-2i}|\xi|^2} \hat{f})|^p
dX \\
& \lesssim2^{  -2 i}  \int_{\R} |  u (X, 2^{ - 2i}) |^p dX  \\
& \lesssim \int_{2^{-2i}}^{2^{-2i+2}}    \int_{\R}
| u (X, 2^{ - 2i}) |^p dX dt \\
& \lesssim  \int_{2^{-2i}}^{2^{-2i+2}}    \int_{\R}  \big( 2^{-i(n+2)} \int_{ J_{2^{-i -1}   (X, 2^{-2i} )}}  u(Y,s) dYds     \big)^p  dXdt  \\
& \lesssim \int_{2^{-2i}}^{2^{-2i+2}}
 \int_{\R} | u (X, t) |^p dX dt,
\end{align*}
where $J_r (X,t) = \{ (Y,s) \in {\bf R}^{n+1} \, | \, |X-Y| < r
,\,\,   |t-s|^\frac12 < r \}$.
Hence, we have
\begin{align*}
\sum_{ 1 \leq i <\infty} (2^{-\frac2p i } \| f*
\phi'_i\|_{L^p (\R)})^p
 & \lesssim  \sum_{1 \leq i <\infty}  \int_{2^{-2i}}^{2^{-2i+2}}
   \int_{\R}  |  u (X, t) |^p dX dt\\
 & \lesssim   \int_0^1
  \int_{\R}  |  u (X, t) |^p dX dt\\
  & \lesssim \int_0^1
 \int_{\R} | u (X, t) |^p dX dt.
\end{align*}
Similarly, the $L^p(\R)$-multiplier of $\hat{\psi}'(\xi) e^{\frac12
|\xi|^2}$ is finite. Hence,
we have
\begin{align*}
 \| f * \psi'\|^p_{L^p(\R) } & \lesssim  \int_{\R} | {\mathcal
F}^{-1} ( \psi' e^{ \frac12 |\xi|^2} e^{- \frac12|\xi|^2}
\hat{f})|^p \\
 & \lesssim \int_{\R}  | u (X, \frac12)|^p dX \\
 & \lesssim \int_{\R}  \int_{J_{\frac{1}4}
(X,\frac12)} |u(Y,s)|^p dYds  dX \\
 & \lesssim \int_0^1  \int_{\R} |u (Y,s)|^p dYds.
\end{align*}
Hence, we proved Theorem \ref{inequality6} when $T =1$.
For general $T > 0$,  we use scaling. Note that
\begin{align*}
v(X,t) = u(T^\frac12X,T t) = \int_{\R} \Ga(X-Y, t) f_T(Y) dY,
\end{align*}
where $f_T(Y) = f(T^\frac12Y)$. Hence, we have
\begin{align*}
 \|f_T\|_{ {\mathcal B}^{-\frac2p}_p  }   \lesssim\int_0^1\int_{\R} |v(X,t) |^p dXdt
 = c T^{-\frac{n+2}2} \int_0^T\int_{\R} |u(X,t) |^p dXdt.
\end{align*}
Since $\|f \|_{ {\mathcal B}^{-\frac2p}_p  }  \lesssim_T  \|f_T\|_{
{\mathcal B}^{-\frac2p}_p  } $, we obtain Theorem \ref{inequality6}
for general $0 < T < \infty$.




\end{proof}

\begin{theo}\label{inequality5}
Let $1 \leq p \leq \infty$ and $i$ be a non-negative integer. Let
$f \in {\mathcal B}^{2i-\frac2p}_p(\R)$ and $u$ is defined by
(\ref{main4}). Then
\begin{align}\label{equivalent}
\| f\|_{{\mathcal B}^{2i-\frac2p}_p(\R)} \lesssim \| u
\|_{W_p^{2i,i}({\bf R}^{n}_T)}.
\end{align}
\end{theo}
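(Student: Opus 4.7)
The approach is to reduce the inequality to the base case $i=0$ already established in Theorem \ref{inequality6}, by using the iterative characterization \eqref{equal} of the Besov norms on $\R$ in the spatial variable together with the fact that spatial differentiation commutes with the heat extension.

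First I would iterate \eqref{equal}, namely the equivalence
\begin{align*}
\|g\|_{{\mathcal B}^{\al}_p(\R)} \approx \|g\|_{{\mathcal B}^{\al-1}_p(\R)} + \sum_{1\leq k\leq n} \|D_{X_k} g\|_{{\mathcal B}^{\al-1}_p(\R)},
\end{align*}
exactly $2i$ times, starting from $\al = 2i - \frac{2}{p}$. This yields the equivalent norm
\begin{align*}
\|f\|_{{\mathcal B}^{2i-\frac{2}{p}}_p(\R)} \approx \sum_{|\be|\leq 2i} \|D^{\be}_{X} f\|_{{\mathcal B}^{-\frac{2}{p}}_p(\R)}.
\end{align*}
This reduces the task to estimating each $\|D^{\be}_{X} f\|_{{\mathcal B}^{-\frac{2}{p}}_p(\R)}$ by $\| u \|_{W_p^{2i,i}({\bf R}^{n}_T)}$.

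Next, for each multi-index $\be$ with $|\be|\leq 2i$, I would observe that $D^{\be}_{X} u$ is precisely the heat extension of $D^{\be}_{X} f$ in the sense of \eqref{main4}: since $\Ga(X-\cdot,t)$ is smooth in $X$ and the duality/convolution pairing commutes with spatial differentiation (using $D_X^\be \Ga(X-Y,t) = (-1)^{|\be|} D_Y^\be \Ga(X-Y,t)$ and integration by parts against $f$), we get
\begin{align*}
D^{\be}_{X} u(X,t) = \langle D^{\be}_{X} f, \Ga(X-\cdot,t)\rangle.
\end{align*}
Since $D^{\be}_{X} f \in {\mathcal B}^{2i-|\be|-\frac{2}{p}}_p(\R) \subset {\mathcal B}^{-\frac{2}{p}}_p(\R)$, Theorem \ref{inequality6} applies to give
\begin{align*}
\|D^{\be}_{X} f\|_{{\mathcal B}^{-\frac{2}{p}}_p(\R)} \lesssim \|D^{\be}_{X} u\|_{L^p(\R_T)} \leq \|u\|_{W_p^{2i,i}({\bf R}^{n}_T)}.
\end{align*}

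Summing over the finitely many $\be$ with $|\be| \leq 2i$ and combining with the first step yields \eqref{equivalent}. I do not expect a serious obstacle here, since both ingredients (the spatial Besov recursion \eqref{equal} and the base case \eqref{eequivalent}) are in hand; the only care needed is to verify that differentiation passes through the duality pairing in \eqref{main4} when $0 \leq 2i - \frac{2}{p} < \frac{2}{p}$ forces some of the intermediate $D^{\be}_{X} f$ to be interpreted via duality rather than as an ordinary convolution, but this is routine from the symmetry of $\Ga$ in the spatial variable.
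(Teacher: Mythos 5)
Your proposal is correct and follows essentially the same route as the paper: reduce to the case $i=0$ (Theorem \ref{inequality6}) by iterating the spatial Besov recursion \eqref{equal} to get $\|f\|_{{\mathcal B}^{2i-\frac2p}_p(\R)} \lesssim \sum_{|\be|\leq 2i}\|D_X^\be f\|_{{\mathcal B}^{-\frac2p}_p(\R)}$, and use that $D_X^\be u$ is the heat extension of $D_X^\be f$ so that each term is controlled by $\|D_X^\be u\|_{L^p(\R_T)} \leq \|u\|_{W^{2i,i}_p(\R_T)}$. Your extra remark about passing the derivative through the duality pairing when $2i-\frac2p<\frac2p$ is a reasonable point of care that the paper glosses over.
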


\begin{proof}
In Theorem \ref{inequality6},  we have
(\ref{equivalent}) for $ i=0$.
 Let $i > 0$. Notice that for $|\be| \leq 2i$,  we have
$D_{X}^\beta u (X,t) = c_n   \int_{\R} t^{-\frac{n}2}
e^{-\frac{|X-Y|^2}{4t}}  D_{Y}^\beta f (Y) dY. $  By \eqref{equal} and \eqref{eequivalent}, we have
\begin{align*}
\| f \|_{{\mathcal B}^{2i -\frac2p} (\R )} \lesssim \sum_{|\be|
\leq 2i} \| D_{X}^\beta f\|_{{\mathcal B}^{-\frac2p}_p (\R)}
\lesssim \sum_{|\be| + 2l \leq 2i} \|D_{X}^\beta u \|_{L^p ({\bf
R}^{n+1}_T)} \lesssim \| u\|_{W_p^{2i,i} ({\bf R}^{n}_T)} .
\end{align*}
This completes the proof of  Theorem \ref{inequality5}.
\end{proof}

Combining Theorem \ref{frac3p}-Theorem \ref{inequality5} and by
the real interpolation property, we obtain the result of Theorem
\ref{mainresult}.


\begin{thebibliography}{10}
\bibitem{A} R. Adams, {\it Sobolev Spaces}, Academic Press,1975.

\bibitem{BL} J. Bergh and J. Lofstrom, {\it Interpolation Spaces, An Introduction},Springer-Verlag, Berllin, 1976.


\bibitem{BeS} C. Bennett and R. Sharpley, {it Interpolation of
                     Operators}, Academic Press, Boston, 1988.

\bibitem{BS}R. Brown and Z. Shen, {\it Estimates for the Stokes operator in Lipschitz domains},
                     Indiana Univ. Math. J., {\bf  44},   no. 4, 1183-1206(1995).


\bibitem{CC} T.  Chang and H. Choe, {\it Estimates of the Green's functions for the elasto-static
                  equations and Stokes equations in a three dimensional Lipschitz domain},  Potential Anal., {\bf  30}, no. 1, 85-99
                  (2009).

\bibitem{DT1} H. Dappa and W. Trebels, {\it On hypersingular
                   integrals and anisotropoc Sobolev spaces},Trans. Amer.
                     Math. Soc. {\bf 286} (1984), 419-429.

\bibitem{DT2} H. Dappa and Trebels, {\it On anisotropic Besov and
                  Bessel Potential spaces}, Approximation and function spaces
                 (Warsaw, 1986), 69--87, Banach Center Publ., 22, PWN, Warsaw,
                             1989.

\bibitem{FMM} E. Fabes, O. Mendez and M. Mitrea, {\it Boundary layers on Sobolev-Besov spaces and Poisson's equation
                                         for the Laplacian in Lipschitz domains}, J. Funct. Anal, {\bf 159}, , no. 2, 323-368(1998).

\bibitem{FT} B. Frnak Jones Jr and C.  Tu, {\it Embedding and continuation
                                                 theorems for parabolic function spaces}, Amer. J. Math, {\bf
                                                 92},
                                                     (1970), 857--868.

\bibitem{J} B. Jones Jr, {\it Lipschitz spaces and the heat equation},
                            J. Math. Mech, {\bf 18}, (1968), 379-409.

\bibitem{JK} D. Jerison and C.  Kenig, {\it The
                              inhomogeneous Dirichlet Problem in Lipschitz
                              domains}, J. of Funct. Anal,
                              $\bf{130}$, (1995), 161-219.

\bibitem{JM} Tunde Jakab and Marius Mitrea, {\it Parabolic initial boundary value problems in nonsmooth cylinders with data in anisotropic Besov
spaces}, Math. Res. Lett, {\bf 13},(2006) no. 5, 825-831.

\bibitem{La} O. A. Ladyzhenskaia, V. A. Solonnikov and N. N.
                     Ural'tceva, {\it Linear and quasi-linear equations of parabolic
                            type}, American Math. Soc. Providence, 1968.


\bibitem{L} P. I. Lizorkin, {\it Description of the spaces
                        $L^r_p(\R)$ in terms of singular difference integrals },
                        Math. USSR -Sb. {\bf 10} (1970). 77-89.

\bibitem{N} S. M. Nikol'skii, {\it Approximation of functions of
                                     several variables and imbedding theorems}, Springer, Berlin 1975.

\bibitem{P} J. Peetre, {\it New Thoughts on Besov Spaces}, Duke
                         Univ. Math. Series, Durham 1976.

\bibitem{R} V.  Gopala Rao, {\it A characterization of parabolic function spaces},
                  Amer. J. math, $\bf{99}$, (1977), no.5, 985-993.

\bibitem{S}  E. Stein, {\it Introduction to Fourier analysis on
                Euclidean spaces}, Princeton
                               University Press, Princeton, $\bf{NJ}$, 1971.

\bibitem{Ta} M.  Taibleson, {\it On the theory of Lipschitz
                                spaces of ditributuins in Euclidean $n$-space. I.
                                Principal properties}, J. Math. Mech, {\bf 13},
                                (1964), 407-476.

\bibitem{Tr} H. Triebel, { \it Theory of Function Spaces},
                             Monographs in Math, Birkh$\ddot{a}$user, Base 1983.

\bibitem{Tr2} H. Triebel, {\it Characterizations of Besov-Hardy-Sobolev spaces }, J. Approx. Theory, {\bf 52} , no. 2, 162-203(1988).

\bibitem{Tr3} H. Triebel, {\it Theory of function spaces. II}, Birkhauser Verlag, Basel, 1992.


\end{thebibliography}
\end{document}